\theoremstyle{plain}
\newtheorem{theorem}{\bf Theorem}
\newtheorem{conjecture}[theorem]{\bf Conjecture}
\newtheorem{problem}[theorem]{\bf Problem}
\newtheorem{proposition}[theorem]{\bf Proposition}
\newtheorem{corollary}[theorem]{\bf Corollary}
\newtheorem{lemma}[theorem]{\bf Lemma}
\theoremstyle{definition}
\newenvironment{remark}[1][Remark.]{\begin{trivlist}
		\item[\hskip \labelsep {\bfseries #1}]}{\end{trivlist}}
\numberwithin{theorem}{section}
\numberwithin{equation}{section}
\DeclareMathOperator{\sa}{\text{sa}}
\DeclareMathOperator{\arbor}{\text{a}}
\DeclareMathOperator{\parden}{\tilde{\rho}}
\DeclareMathOperator{\den}{\rho}
\newcommand{\lfrac}[2]{\left\lfloor \frac{#1}{#2}\right\rfloor}
\newcommand{\ceilfrac}[2]{\left\lceil \frac{#1}{#2}\right\rceil}
\DeclareMathOperator{\eps}{\varepsilon}
\renewcommand{\eprint}[1]{\href{https://arxiv.org/abs/#1}{arXiv:#1}}
\renewcommand{\PrintNames@a}[4]{%
	\PrintSeries{\name}
	{#1}
	{}{ and \set@othername}
	{,}{ \set@othername}
	{}{ and \set@othername}
	{#2}{#4}{#3}%
}
\begin{document}

\title{Partition density, star arboricity, and sums of Laplacian eigenvalues of graphs}

    \author{Alan Lew\thanks{\href{mailto:alanlew@andrew.cmu.edu}{alanlew@andrew.cmu.edu}.}}
    \affil{Dept. Math. Sciences, Carnegie Mellon University, Pittsburgh, PA 15213, USA}

	\date{}
	\maketitle

\begin{abstract}
Let $G=(V,E)$ be a graph on $n$ vertices, and let $\lambda_1(L(G))\ge \cdots\ge \lambda_{n-1}(L(G))\ge \lambda_n(L(G))=0$ be the eigenvalues of its Laplacian matrix $L(G)$. 
Brouwer conjectured that for every $1\le k\le n$, $\sum_{i=1}^k \lambda_i(L(G)) \le |E|+\binom{k+1}{2}$.
Here, we prove the following weak version of Brouwer's conjecture: 
For every $1\leq k \leq n$,
\[
    \sum_{i=1}^k \lambda_i(L(G)) \leq 
       |E|+k^2+15k\log{k}+65k.
\]

For a graph $G=(V,E)$, we define its partition density $\tilde{\rho}(G)$ as the maximum, over all subgraphs $H$ of $G$, of the ratio between the number of edges of $H$ and the number of vertices in the largest connected component of $H$. Our argument relies on the study of the structure of the graphs $G$ satisfying $\tilde{\rho}(G)< k$.  In particular, using a result of Alon, McDiarmid and Reed, we show that every such graph can be decomposed into at most $k+ 15\log{k}+65$ edge-disjoint star forests (that is, forests whose connected components are all isomorphic to stars).

In addition, we show that for every graph $G=(V,E)$ and every $1\le k\le |V|$,
\[
    \sum_{i=1}^k \lambda_i(L(G)) \leq 
        |E|+k\cdot \nu(G) + \left\lfloor\frac{k}{2}\right\rfloor,
\]
where $\nu(G)$ is the maximum size of a matching in $G$.
\end{abstract}

\section{Introduction}

Let $G=(V,E)$ be a graph. For a vertex $v\in V$, the \emph{degree} of $v$ in $G$,  denoted by $\deg(v)$, is the number of edges of $G$ incident to $v$. The \emph{Laplacian matrix} of $G$, denoted by $L(G)$, is the $|V|\times |V|$ matrix defined by
\[
     L(G)_{u,v}=\begin{cases}
         \deg(u) & \text{if } u=v,\\
         -1 & \text{if } \{u,v\}\in E,\\
         0 & \text{otherwise,}
     \end{cases}
\]
for all $u,v\in V$. For $1\leq i\leq |V|$, we denote the $i$-th largest eigenvalue of $L(G)$ by $\lambda_i(L(G))$.

Brouwer proposed the following conjecture about the sum of the $k$ largest Laplacian eigenvalues of a graph $G$.

\begin{conjecture}[Brouwer {\cite[Section 3.1.1]{brouwer2012book}}]
\label{con:brouwer}
    Let $G=(V,E)$ be a graph. Then, for all $1\le k\le |V|$, 
    \begin{equation}\label{eq:brouwer}
    \sum_{i=1}^k \lambda_i(L(G))\le |E|+\binom{k+1}{2}.
    \end{equation}
\end{conjecture}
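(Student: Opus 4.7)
The plan is to exploit the additivity of the Laplacian under edge-disjoint union together with the subadditivity of Ky Fan partial sums for positive semidefinite matrices: if $G$ decomposes into edge-disjoint subgraphs $G_1,\dots,G_t$, then
\[
\sum_{i=1}^k \lambda_i(L(G))\;\le\;\sum_{j=1}^t \sum_{i=1}^k \lambda_i(L(G_j)).
\]
So it suffices to decompose $G$ into simple pieces whose top-$k$ Laplacian spectrum is transparent.

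The natural candidate is a star forest $F$. Each component $K_{1,d}$ has Laplacian eigenvalues $d+1$, then $d-1$ copies of $1$, and one $0$; a short case analysis gives $\sum_{i=1}^k \lambda_i(L(F))\le |E(F)|+k$. Consequently, if $G$ admits an edge-disjoint decomposition into $t$ star forests, then
\[
\sum_{i=1}^k \lambda_i(L(G))\;\le\;|E|+kt,
\]
and Brouwer's bound $|E|+\binom{k+1}{2}$ would follow from the existence of such a decomposition with $t\le (k+1)/2$.

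Of course an arbitrary $G$ need not admit so few star forests, so next I would localize: identify the ``dense core'' of $G$ responsible for most of the top eigenvalues, decompose it, and handle the remaining low-density part directly. A natural parameter is the partition density $\tilde{\rho}(G)$ introduced in the abstract, which controls star arboricity via the Alon--McDiarmid--Reed theorem. I would restrict to the maximal subgraph $H\subseteq G$ with $\tilde{\rho}(H)<k$ (or a similar threshold), bound its star arboricity by something like $k+O(\log k)$, and deal with the complement using an elementary argument relating its edge count to the missing top-eigenvalue mass.

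The main obstacle is the factor-of-two gap between what this scheme yields and what the conjecture requires. Since the Alon--McDiarmid--Reed star arboricity bound is of the form $k+O(\log k)$ rather than $(k+1)/2$, the method is inherently off by a constant factor (plus a logarithmic correction), so it will produce only a bound of the shape $|E|+k^2+O(k\log k)$ -- precisely the weak version announced in the abstract. Closing the remaining gap to the full Brouwer bound would require either a much sharper star-forest decomposition, which seems unlikely to hold in such generality, or a genuinely different spectral argument that does not proceed via edge partitioning at all; I do not see how to achieve the exact constant $\binom{k+1}{2}$ within this framework.
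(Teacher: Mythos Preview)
The statement you are attempting to prove is Brouwer's \emph{conjecture}; the paper does not prove it, and indeed it remains open. So there is no ``paper's own proof'' to compare against.

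What you have sketched is precisely the paper's route to its main result, Theorem~\ref{thm:weak_brouwer_improved}: Ky Fan subadditivity under edge-disjoint decomposition (Lemma~\ref{lemma:excess}), the bound $\eps_k(F)\le k$ for a star forest (Lemma~\ref{lemma:star_arboricity_bound}), reduction to a subgraph with partition density below $k$ (Lemma~\ref{lemma:sparse_subgraph}), and then the Alon--McDiarmid--Reed machinery to bound the star arboricity of that subgraph (Theorem~\ref{thm:sa}). Your diagnosis of the obstruction is also exactly right: the star-arboricity bound is $k+O(\log k)$, not $(k+1)/2$, so the method yields $|E|+k^2+O(k\log k)$ and no better. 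The paper makes the same point implicitly by presenting this only as a weak form of the conjecture.

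One small correction to your outline: the reduction step is not ``restrict to the maximal subgraph $H$ with $\tilde\rho(H)<k$ and handle the complement separately.'' Rather, the paper shows (Lemma~\ref{lemma:sparse_subgraph}) that one can pass to a subgraph $G'$ with $\tilde\rho(G')<k$ \emph{without decreasing} $\eps_k$, by iteratively deleting subgraphs $H$ with $\eps_k(H)\le 0$; any subgraph $H$ with $|E(H)|\ge k\cdot n'(H)$ has $\eps_k(H)\le 0$ by Corollary~\ref{cor:n'_bound}. There is no complementary piece left to handle. Your version would also work but is slightly less clean.

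In short: your plan is correct as a proof of the weak bound, matches the paper's argument essentially line for line, and your own final paragraph correctly explains why it cannot reach the conjectured $\binom{k+1}{2}$.
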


Brouwer's Conjecture can be seen as a variant of the Grone-Merris conjecture \cite{grone1994laplacian}, which was proved by Bai in \cite{bai2011gronemerris}.

\begin{theorem}[Bai \cite{bai2011gronemerris}]\label{thm:bai}
    Let $G=(V,E)$ be a graph. Then, for all  $1\le k\le |V|$,
    \[
        \sum_{i=1}^k \lambda_i(L(G))\le \sum_{i=1}^k \left|\left\{ v\in V:\, \deg(v)\ge i         \right\}\right|.
    \]
\end{theorem}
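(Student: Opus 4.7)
The plan is to reduce Bai's inequality to a variational problem via the Ky Fan maximum principle and then to the case of threshold graphs, where equality holds. By Ky Fan,
\[
\sum_{i=1}^k \lambda_i(L(G)) = \max_{U:\,U^\T U = I_k} \trace(U^\T L(G) U) = \max_{U} \sum_{\{u,v\}\in E} \|U_u - U_v\|^2,
\]
where $U$ ranges over $|V|\times k$ matrices with orthonormal columns and $U_v$ denotes the $v$-th row of $U$; the row norms satisfy $\|U_v\|^2 \le 1$ and $\sum_v \|U_v\|^2 = k$. By swapping the order of summation, the right-hand side of the theorem can be rewritten as $\sum_{v\in V}\min(\deg(v),k)$, which is a far more tractable target.

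A naive per-edge bound $\|U_u - U_v\|^2 \le 2(\|U_u\|^2 + \|U_v\|^2)$ yields only $\trace(U^\T L U) \le 2\sum_v \deg(v)\|U_v\|^2$, which is off by roughly a factor of two from the target. Evaluating this on the Perron eigenvector of a star $K_{1,d}$ already shows the deficit is real, not an artifact of the estimate. Hence a purely local variational argument on a fixed $U$ cannot suffice, and one must exploit the global structure of $G$.

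My next move would be to reduce to \emph{threshold graphs}, i.e., graphs built from a single vertex by iteratively adding isolated or dominating vertices. Their Laplacian spectra coincide (up to a trailing zero) with their conjugate degree partitions, so Bai's inequality is an equality in that case. The aim is to transform $G$ into a threshold graph by a sequence of ``$2$-switches'' (replacing edges $\{a,b\},\{c,d\}$ with $\{a,c\},\{b,d\}$), each preserving the degree sequence, and to choose the switches so that $\sum_{i=1}^k \lambda_i(L(\cdot))$ is nondecreasing. Since the right-hand side depends only on the degree sequence, it is invariant under such switches, and the bound would then follow from the threshold case.

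The main obstacle is the monotonicity step: a $2$-switch perturbs $L(G)$ by an explicit rank-$2$ symmetric matrix, but controlling its effect on the top-$k$ eigenvalue sum requires a delicate Weyl-type argument that depends on how the top-$k$ eigenspace of $L(G)$ sits relative to the four vertices involved. This is precisely where Bai's original proof of the Grone--Merris conjecture invests the bulk of its technical machinery, and I expect it to be the genuinely difficult step. If monotonicity fails for some natural switches, one likely has to interleave $2$-switches with an auxiliary potential based on simultaneous majorization of the spectrum and of the degree sequence, and carefully orchestrate the order of moves so that the potential strictly increases until the threshold configuration is reached.
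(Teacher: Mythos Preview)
The paper does not prove Theorem~\ref{thm:bai}; it merely quotes it as background from Bai~\cite{bai2011gronemerris}. So there is no proof in the paper to compare against, and your write-up should not be read as a reconstruction of something the paper does.

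On the substance of your proposal: what you have written is a strategy, not a proof. You correctly set up the Ky Fan variational formula and the rewriting of the right-hand side as $\sum_{v}\min(\deg(v),k)$, and you correctly observe that the naive row-norm bound loses a factor of~$2$. But the heart of your plan --- showing that $\sum_{i=1}^k\lambda_i(L(\cdot))$ is monotone along a chain of $2$-switches leading to a threshold graph --- is left as a hope, and you yourself flag it as ``the genuinely difficult step.'' That is the entire content of the theorem; everything before it is standard. As stated, the proposal has a gap exactly where the result lives.

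It is also worth noting that this is \emph{not} how Bai's argument goes. Bai does not move $G$ to a threshold graph by $2$-switches and track eigenvalue sums along the way. Instead, he works with a fixed split (threshold) graph $H$ sharing the relevant part of the degree data, applies Ky Fan's inequality to $L(G)=L(H)+(L(G)-L(H))$, and controls the correction term via a block/Schur-complement eigenvalue argument tailored to the split structure. The monotonicity-under-$2$-switches route you propose is not known to succeed; a single $2$-switch changes $L$ by an indefinite rank-$2$ perturbation, and there is no general reason the top-$k$ partial trace should move in a fixed direction. If you want to pursue this line, you would need either a careful choice of switch order together with a potential function that genuinely increases, or a direct comparison with the threshold Laplacian in the spirit of Bai --- at which point the $2$-switch scaffolding becomes unnecessary.
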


Conjecture \ref{con:brouwer} has received considerable attention in recent years, and some progress has been made. The $k=1$ case of the conjecture follows from the bound $\lambda_1(L(G))\le n'$, where $n'$ is the number of vertices in the largest connected component of $G$ (see Corollary \ref{cor:lambda_1_le_n}). The $k=|V|$ and $k=|V|-1$ cases are trivial, since $\sum_{i=1}^{|V|-1}\lambda_i(L(G))=\sum_{i=1}^{|V|}\lambda_i(L(G))=2|E|$. Haemers, Mohammadian and Tayfeh-Rezaie proved the $k=2$ case in \cite{haemers2010onthesum}.  
Li and Guo \cite{li2022full} proposed a conjectural characterization of the extremal cases of  \eqref{eq:brouwer}, and proved it for $k=2$ (this special case was previously conjectured by Guan, Zhai and Wu in \cite{guan2014onthesum}). 
Furthermore, Conjecture \ref{con:brouwer} has been verified for several families of graphs, such as trees \cite{haemers2010onthesum} (see \cite{fritscher2011onthesum} for an improved bound in this case), split graphs \cite{mayard2010thesis,berndsen2010thesis} and regular graphs \cite{mayard2010thesis,berndsen2010thesis}. Rocha showed in \cite{rocha2020aas} that the inequality in \eqref{eq:brouwer} holds asymptotically almost surely for various families of random graphs. Recently, in \cite{cooper2021constraints}, Cooper verified the conjecture for graphs whose degree sequences have small variance, generalizing previous results on regular graphs and on random graphs.
For more related work, see, for example, \cite{du2012upper,wang2012onaconjecture,das2015energy,chen2018note,chen2019onbrouwers,ganie2020further,cooper2021constraints} and the references therein.

Our main result is the following weak version of Conjecture \ref{con:brouwer}. 

\begin{theorem}\label{thm:weak_brouwer_improved}
Let $G=(V,E)$ be a graph. Then, for all $1\le k\le |V|$,
\[
    \sum_{i=1}^k \lambda_i(L(G))\le |E|+k^2+15k\log{k}+65k.
\]
\end{theorem}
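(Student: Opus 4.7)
The plan is to combine a structural decomposition of graphs of low partition density with a spectral subadditivity argument. The first ingredient is a pointwise bound: for any star forest $F$,
\[
\sum_{i=1}^k \lambda_i(L(F)) \le k + |E(F)|.
\]
I would prove this by a direct eigenvalue computation: each star $K_{1,m}$ has nonzero Laplacian eigenvalues $m+1$ (once) and $1$ (with multiplicity $m-1$), so tallying the top $k$ eigenvalues of $L(F)=\bigoplus_j L(K_{1,m_j})$ across the star components and using $\sum_j m_j=|E(F)|$ yields the bound in each of the three regimes in which $k$ is at most, between, or above the number of stars and the number of edges.

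The second ingredient is the structural claim advertised in the abstract: if $\tilde{\rho}(G)<k$, then $G$ is the edge-disjoint union of at most $t = k+15\log k+65$ star forests $F_1,\ldots,F_t$. I would prove this by feeding the sparsity encoded in $\tilde{\rho}(G)<k$ into the Alon--McDiarmid--Reed bound on star arboricity. Writing $L(G)=\sum_j L(F_j)$ and invoking Ky Fan's inequality (the top-$k$ eigenvalue sum is subadditive on symmetric matrices), the two ingredients combine to give
\[
\sum_{i=1}^k \lambda_i(L(G)) \le \sum_{j=1}^t \bigl(k+|E(F_j)|\bigr) = kt + |E(G)| \le |E(G)|+k^2+15k\log k+65k,
\]
which settles the theorem whenever $\tilde{\rho}(G)<k$.

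To handle an arbitrary $G$, I would reduce to the low-density case by iteratively peeling off witnesses of large partition density. So long as $\tilde{\rho}(G)\ge k$, pick any subgraph $H\subseteq G$ with $|E(H)|\ge k\cdot c(H)$, where $c(H)$ denotes the number of vertices in a largest connected component of $H$, and delete its edges. This produces edge-disjoint subgraphs $H_1,\ldots,H_m$ and a remainder $G'$ with $\tilde{\rho}(G')<k$. By Corollary~\ref{cor:lambda_1_le_n}, $\lambda_i(L(H_j))\le c(H_j)$ for every $i$, so $\sum_{i=1}^k\lambda_i(L(H_j))\le k\,c(H_j)\le |E(H_j)|$. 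One more application of Ky Fan subadditivity to $L(G)=L(G')+\sum_{j=1}^m L(H_j)$, combined with the low-density case applied to $G'$, telescopes to exactly the desired bound $|E(G)|+k^2+15k\log k+65k$.

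The hard part will be the structural step. Alon--McDiarmid--Reed is naturally phrased in terms of arboricity or maximum degree rather than the partition density $\tilde{\rho}$, so the real work is translating the hypothesis $\tilde{\rho}(G)<k$ into a form palatable to their theorem while tracking the logarithmic overhead carefully enough that the star arboricity comes out at most $k+15\log k+65$. Once that is in hand, the spectral side of the proof is comparatively routine, relying only on Ky Fan's inequality, the explicit Laplacian spectrum of $K_{1,m}$, and the standard bound $\lambda_1(L(H))\le c(H)$.
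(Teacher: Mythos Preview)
Your proposal is correct and follows essentially the same route as the paper: the reduction to $\tilde\rho(G)<k$ via peeling off dense subgraphs is the content of Lemmas~\ref{lem:removable_graphs}--\ref{lemma:sparse_subgraph}, the star-forest eigenvalue bound together with Ky Fan subadditivity are Lemmas~\ref{lemma:star_arboricity_bound} and~\ref{lemma:excess}, and the structural step you flag as the hard part is exactly Theorem~\ref{thm:sa}. The paper carries out that step by first proving a decomposition (Proposition~\ref{prop:structure}) of any graph with $\tilde\rho(G)<k$ into a piece of size $O(k^2)$, a set $C$ of at most $k$ vertices, and an independent set with neighbors only in $C$, which is precisely the ``translation'' you anticipate needing before invoking the Alon--McDiarmid--Reed machinery.
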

Here and throughout the paper, $\log{x}$ denotes the natural logarithm of $x$.

Let $G=(V,E)$ be a graph. We write $E(G)=E$. For $U\subset V$, let $G[U]$ be the subgraph of $G$ induced by $U$. Let $n'(G)$ be the number of vertices in the largest connected component of $G$. We define
\begin{equation}\label{eq:parden1}
    \parden(G)= \max_{H\subset G} \frac{|E(H)|}{n'(H)},
\end{equation}
where the maximum is taken over all subgraphs $H$ of $G$ with at least one vertex. Since this maximum is always obtained by a spanning subgraph whose connected components are all induced subgraphs of $G$, we can write
\begin{equation}\label{eq:parden2}
\parden(G)=\max\left\{\frac{\sum_{i=1}^m |E(G[V_i])|}{\max_{1\le i\le m} |V_i|} 
  :\, m\ge 1,\, V_1,\ldots,V_m \text{ is a partition of } V \right\}.
\end{equation}
Hence, we call $\parden(G)$ the \emph{partition density} of $G$. 
The partition density of a graph is tightly related to various classical properties, such as the arboricity, $k$-orientability (see Section \ref{sec:prelims_orientability}),   and the matching number of the graph (see Section \ref{sec:prelims_matching}). 

A \emph{star forest} is a graph whose connected components are all isomorphic to star graphs.
The \emph{star arboricity} of $G$, denoted by $\sa(G)$, is the minimum $t$ such that $G$ can be decomposed into $t$ edge-disjoint star forests. The notion of star arboricity was introduced by Akiyama and Kano in \cite{akiyama1982path}, and further studied in \cite{algor1989thestar,aoki1990star,kurek1992arboricity,alon1992star,hakimi1996star} (see Section \ref{sec:prelims_orientability} for more details on star arboricity and its relation to other graph parameters). 

The main ingredient in the proof of Theorem \ref{thm:weak_brouwer_improved} is the following result about the star arboricity of graphs with low partition density.

\begin{theorem}
    \label{thm:sa}
    Let $k\ge 1$, and let $G=(V,E)$ be a graph with $\parden(G)<k$. Then, 
    \[
    \sa(G)\le k+15\log{k}+65.
    \]
\end{theorem}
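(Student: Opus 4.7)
The plan is to deduce Theorem \ref{thm:sa} from a theorem of Alon, McDiarmid, and Reed on star arboricity, via a simple reduction from the partition density hypothesis to a bound on the maximum subgraph density.

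\textbf{Step 1 (orientation from partition density).} The key observation is that $n'(H) \le |V(H)|$ for every nonempty subgraph $H \subset G$, so the hypothesis $\parden(G) < k$ immediately gives
\[
|E(H)| \;<\; k\, n'(H) \;\le\; k\, |V(H)|.
\]
By Hakimi's orientation theorem (equivalently, Frank--Gy\'arf\'as), this is equivalent to the existence of an orientation $\vec G$ of $G$ in which every vertex has out-degree at most $k$; in particular, $G$ can be decomposed into $k$ edge-disjoint pseudoforests.

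\textbf{Step 2 (invoking Alon--McDiarmid--Reed).} Next I apply the AMR bound on star arboricity to $\vec G$. Their theorem, proved by a Lov\'asz Local Lemma argument on a random vertex coloring, implies that a graph admitting an orientation with maximum out-degree at most $d$ has star arboricity at most $d + O(\log d)$. Applied with $d = k$, and tracking the explicit constants in their proof, this yields $\sa(G) \le k + 15\log k + 65$, as claimed.

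\textbf{Main obstacle.} Step 1 is essentially immediate: one just observes that $\parden(G)$ dominates the usual maximum density $\max_H |E(H)|/|V(H)|$. The substance lies in Step 2, where AMR is invoked as a black box. In outline, one randomly colors the vertices of $G$ with $t = k + O(\log k)$ colors and takes the $i$-th candidate star forest to consist of the edges of $\vec G$ directed out of vertices of color $i$ (these form stars rooted at such vertices). A color class fails to be a star forest exactly when some star center has an out-neighbor of the same color; the bad-event probabilities are at most $k/t$, and the Local Lemma handles the local dependencies, whose degree is controlled by $k$. A careful choice of $t$ yields the explicit overhead $15\log k + 65$; pinning down these constants from AMR's argument is the only quantitative work required and is where the bulk of the effort of the proof goes.
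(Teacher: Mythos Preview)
Your Step 2 contains a genuine gap. The Alon--McDiarmid--Reed bound that the paper extracts from \cite{alon1992star} is not $\sa(G)\le k+O(\log k)$ for $k$-orientable $G$; it is $\sa(G)\le k+15\log k+6\log(\Delta(G))/\log k+65$, and the $\Delta(G)$ term is essential. Your sketch of the Local Lemma argument reveals the problem: with out-degree bounded by $k$ and the $i$-th candidate star forest consisting of edges out of colour-$i$ vertices, a colour class fails to be a star forest not only when a centre has an out-neighbour of the same colour, but also when some vertex $v$ has \emph{two in-neighbours} of colour $i$ (then $v$ lies in two different stars). The probability and dependency structure of this second bad event are governed by the in-degrees, which in a $k$-orientation are bounded only by $\Delta(G)$, not by $k$. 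And $\parden(G)<k$ does not bound $\Delta(G)$ at all: the paper's own example $K_{k,n}$ has $\parden<k$ yet $\Delta=n$.

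What your proposal misses is precisely the part of the paper that uses the \emph{partition} density rather than the ordinary density. The hypothesis $\parden(G)<k$ gives much more than $k$-orientability: by Lemma~\ref{lemma:parden_matching} it forces $\nu(G)\le 2k-1$ and $\nu_k(G)\le k$, and from these the paper (Proposition~\ref{prop:structure}) extracts a decomposition $V=U\cup C\cup I$ with $|U\cup C|=O(k^2)$, $|C|\le k$, and $I$ independent with $N_G(I)\subset C$. The AMR machinery is then applied not to $G$ but to an auxiliary graph on $U\cup C\cup\{x\}$ of maximum degree at most $5k^2$, where the $\log(\Delta)/\log k$ term becomes $O(1)$; the resulting $(k,c)$-assignment is then extended to all of $G$ using the structure of $I$. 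This reduction is the heart of the proof, and it cannot be skipped.
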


The proof of Theorem \ref{thm:sa} relies on an argument by Alon, McDiarmid and Reed from \cite{alon1992star} (see Section \ref{sec:structure} for more details). The proof of Theorem \ref{thm:weak_brouwer_improved} may be outlined as follows: First, we reduce the problem to the case of graphs with partition density smaller than $k$ (Lemma \ref{lemma:sparse_subgraph}). Then, we show that the sum of the $k$ largest Laplacian eigenvalues of a graph $G=(V,E)$ is at most $|E|+k\cdot \sa(G)$ (Lemma \ref{lemma:star_arboricity_bound}). Finally, the claim follows from Theorem \ref{thm:sa}.

Next, we study the relation between the sum of the $k$ largest Laplacian eigenvalues and the matching number of a graph.
A \emph{matching} in a graph $G$ is a set of pairwise disjoint edges. The \emph{matching number} of $G$, denoted by $\nu(G)$, is the maximum size of a matching in $G$. A \emph{vertex cover} of a graph $G=(V,E)$ is a set $C\subset V$ such that $C\cap e\ne \emptyset$ for all $e\in E$.
The \emph{covering number} of $G$, denoted by $\tau(G)$, is the minimum size of a vertex cover of $G$.
Das, Mojallal and Gutman showed in \cite{das2015energy} that for every graph $G$ and every $1\leq k\leq |V|$, $\sum_{i=1}^k \lambda_i(L(G)) \le |E|+k\cdot \tau(G)$ (see Corollary \ref{cor:covering}). Since, for every graph $G$,  $\tau(G)\le 2 \nu(G)$, we obtain, as a consequence,
\begin{equation}\label{eq:old_bound}
\sum_{i=1}^k \lambda_i(L(G)) \le |E|+2k\cdot \nu(G).
\end{equation}

Here, we prove the following inequality, which improves on the bound in \eqref{eq:old_bound}.

\begin{theorem}\label{thm:matching_number}
Let $G=(V,E)$ be a graph. Then, for all $1\leq k\leq |V|$,
\[
    \sum_{i=1}^k \lambda_i(L(G)) \le |E|+ k\cdot \nu(G) +\lfrac{k}{2}.
\]
\end{theorem}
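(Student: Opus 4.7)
The plan is to induct on the matching number $\nu:=\nu(G)$. The base case $\nu=0$ is immediate: if $G$ has no edge, both sides of the desired inequality equal $0$ (modulo the trivial $\lfloor k/2\rfloor$ slack).

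For the inductive step, I would pick an edge $\{a,b\}$ of some maximum matching $M$ of $G$ and set $G':=G-\{a,b\}$, the subgraph obtained by deleting both endpoints. Since any matching of $G$ of size $\nu$ must meet $\{a,b\}$ (else one could adjoin $\{a,b\}$ to it), one has $\nu(G')=\nu-1$. Let $H$ be the subgraph on vertex set $V$ whose edges are exactly the edges of $G$ incident to $a$ or $b$, so $|E(H)|=\deg_G(a)+\deg_G(b)-1$. Viewing $L(G')$ as an $|V|\times|V|$ matrix by padding with zeros in the rows and columns of $a$ and $b$, one gets the PSD decomposition $L(G)=L(G')_{\mathrm{emb}}+L(H)$, so Ky Fan's inequality gives
\[
\sum_{i=1}^k \lambda_i(L(G))\;\le\;\sum_{i=1}^k \lambda_i(L(G'))+\sum_{i=1}^k \lambda_i(L(H)).
\]
Substituting the inductive hypothesis into the first term, the induction will close provided $\sum_{i=1}^k\lambda_i(L(H))\le |E(H)|+k$. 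When $G$ has a pendant vertex $a$ (so $\deg_G(a)=1$), a standard matching swap puts $\{a,b\}$ into some maximum matching, and the resulting $H$ is exactly the star centered at $b$; the star-Laplacian identity $\sum_{i=1}^k\lambda_i(L(\text{star with } m\text{ edges}))=m+\min(k,m)\le m+k$ then closes the induction exactly.

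The main obstacle will be the no-pendant case $\delta(G)\ge 2$. There, regardless of the choice of $\{a,b\}\in M$, the subgraph $H$ is a genuine ``double broom'' (possibly including common neighbors of $a$ and $b$), and the bound $\sum_{i=1}^k\lambda_i(L(H))\le |E(H)|+k$ can fail substantially: for example by a constant when $H=P_4$ at $k=2$, and by $\Theta(k)$ for ``books'' $H=K_{2,m}+\{a,b\}$. My plan for this case is to strengthen the inductive hypothesis to carry an additional ``slack budget'' equal to the $\lfloor k/2\rfloor$ term, which is consumed in no-pendant steps, and to prove that the cumulative excess $\sum_{i=1}^k\lambda_i(L(H))-(|E(H)|+k)$ across all such steps stays within $\lfloor k/2\rfloor$. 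The structural inputs I would use are Berge's theorem (the absence of augmenting paths sharply limits common neighbors of matched pairs in the unmatched set $V_U$), together with the bound $\tilde\rho(G)\le\nu(G)$, which can be shown by a componentwise Erd\H{o}s–Gallai estimate $|E(H')|\le |V(H')|\cdot \nu(H')$ for connected $H'$; this also ties the argument back to the partition-density framework developed earlier in the paper. Verifying that the amortized slack budget indeed balances at every inductive step is the delicate part of the plan.
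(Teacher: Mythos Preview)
Your plan has a genuine gap in the no-pendant case: the amortization you propose---that the cumulative excess $\sum_{\text{steps}}\bigl(\sum_{i=1}^k\lambda_i(L(H))-|E(H)|-k\bigr)$ stays within $\lfloor k/2\rfloor$---is simply false. Take $G$ to be the disjoint union of $t\ge 2$ triangles and set $k=2$. There are no pendant vertices, so every one of the $t=\nu(G)$ inductive steps is a no-pendant step; at each step the removed subgraph $H$ is a copy of $K_3$, whose Laplacian spectrum is $\{3,3,0\}$, so $\sum_{i=1}^2\lambda_i(L(H))=6$ while $|E(H)|+k=5$, giving excess $1$ per step. The cumulative excess is $t$, which exceeds the budget $\lfloor k/2\rfloor=1$ as soon as $t\ge 2$. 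Concretely, for $t=2$ your Ky Fan chain yields only $\sum_{i=1}^2\lambda_i(L(G))\le 12$, whereas the target is $|E|+2\nu(G)+1=11$; the theorem is of course true here (the left side equals $6$), but your induction does not prove it. The structural inputs you mention (Berge's theorem, the bound $\parden(G)\le\nu(G)$) do nothing to rule out this example, since disjoint triangles have no augmenting paths and satisfy $\parden=\nu$ with equality.

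The underlying issue is that peeling off one matching edge at a time forces you to pay a positive excess every time you hit an odd component, and the number of odd components can be as large as $\nu(G)$. The paper's proof avoids this by working globally: it invokes Edmonds' odd-set-cover duality to write $\nu(G)=r+\sum_i(|S_i|-1)/2$ for a cover $\{v_1,\dots,v_r,S_1,\dots,S_t\}$, then splits $E$ into the edges meeting $\{v_1,\dots,v_r\}$ (a graph $G_1$ with $\tau(G_1)\le r$, handled by Corollary~\ref{cor:covering}) and the remaining edges $G_2$, each of which lies inside some $S_i$. The key point is that $n'(G_2)\le\max_i|S_i|\le 2(\nu(G)-r)+1$, so a \emph{single} application of Proposition~\ref{prop:small_n_components} gives $\eps_k(G_2)\le k(\nu(G)-r)+\lfloor k/2\rfloor$. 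The $\lfloor k/2\rfloor$ term thus appears once, not once per odd component---precisely the saving your edge-by-edge decomposition cannot achieve.
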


As a consequence of Theorem \ref{thm:matching_number}, we obtain the following result.

\begin{corollary}\label{cor:weak_brouwer}
Let $G=(V,E)$ be a graph. Then, for all $1\le k\le |V|$, 
    \[
    \sum_{i=1}^k \lambda_i(L(G))\le |E|+2k^2-\ceilfrac{k}{2}.
    \]
\end{corollary}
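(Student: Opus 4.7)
The plan is to derive Corollary \ref{cor:weak_brouwer} from Theorem \ref{thm:matching_number} via a case analysis on the matching number $\nu(G)$, combined with the elementary identity $\lfloor k/2\rfloor + \lceil k/2\rceil = k$, which rearranges to
\[
k(2k-1) + \lfloor k/2\rfloor \;=\; 2k^2 - k + \lfloor k/2\rfloor \;=\; 2k^2 - \lceil k/2\rceil.
\]

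In the main regime $\nu(G) \le 2k-1$, substitution into Theorem \ref{thm:matching_number} is immediate:
\[
\sum_{i=1}^k \lambda_i(L(G)) \;\le\; |E| + k\cdot\nu(G) + \lfloor k/2\rfloor \;\le\; |E| + k(2k-1) + \lfloor k/2\rfloor \;=\; |E| + 2k^2 - \lceil k/2\rceil.
\]
This already covers all $k \ge |V|/2$ automatically, since then $\nu(G) \le \lfloor |V|/2\rfloor \le k \le 2k-1$.

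The case $\nu(G) \ge 2k$ is the main obstacle, because a plain substitution into Theorem \ref{thm:matching_number} leaves a slack of at least $k$ on the right-hand side. The trivial trace identity $\sum_{i=1}^{|V|} \lambda_i(L(G)) = 2|E|$ resolves the subcase $|E| \le 2k^2 - \lceil k/2\rceil$, so the genuinely hard subcase is when both $\nu(G) \ge 2k$ and $|E| > 2k^2 - \lceil k/2\rceil$ hold simultaneously. To address this remaining regime, my plan is to pass to a spanning subgraph $H \subseteq G$ with $\nu(H) \le 2k-1$ (for instance by carefully deleting edges drawn from a maximum matching of $G$, so that the discarded edges themselves form a matching) and then to combine Theorem \ref{thm:matching_number} applied to $H$ with Ky Fan's inequality
\[
\sum_{i=1}^k \lambda_i(L(G)) \;\le\; \sum_{i=1}^k \lambda_i(L(H)) + \sum_{i=1}^k \lambda_i(L(G \setminus H)),
\]
together with the identity $\sum_{i=1}^k \lambda_i(L(M)) = 2\min(k, |M|)$ for the Laplacian of a matching $M$. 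The delicate step is to choose $H$ so that the loss of $|E(G \setminus H)|$ edges is fully absorbed by the reduction from $k\cdot\nu(G)$ to $k\cdot\nu(H) \le k(2k-1)$ on the matching-number term, keeping the coefficient of $|E|$ on the right-hand side equal to $1$.
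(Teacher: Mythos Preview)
Your treatment of the case $\nu(G)\le 2k-1$ is correct and is exactly the final computation in the paper. The gap is in the ``hard'' case $\nu(G)\ge 2k$: your proposed mechanism for producing $H$ does not work. Deleting a matching from $G$ need not lower the matching number at all. Concretely, take $G=K_{2k,2k}$, so $\nu(G)=2k$. For any matching $M\subseteq E(G)$, the bipartite graph $G\setminus M$ still satisfies Hall's condition (every vertex keeps degree at least $2k-1$), hence still has a perfect matching, so $\nu(G\setminus M)=2k$. Thus there is no matching you can delete to force $\nu(H)\le 2k-1$, and your Ky Fan accounting cannot even get started. The same example shows that the phrase ``carefully deleting edges drawn from a maximum matching'' does not hide a fix: you would need to delete non-matching edges, at which point your formula $\sum_{i=1}^k\lambda_i(L(G\setminus H))=2\min(k,|M|)$ no longer applies and the bookkeeping collapses.

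The paper avoids this obstacle entirely. Rather than trying to decrease $\nu$ by an explicit edge deletion, it proves (Lemma~\ref{lem:removable_graphs} and Lemma~\ref{lemma:sparse_subgraph}) that one can repeatedly remove any non-empty subgraph $H$ with $\eps_k(H)\le 0$ without decreasing $\eps_k$; once no such $H$ remains, the resulting subgraph $G'$ satisfies $\parden(G')<k$, and hence $\nu(G')\le 2k-1$ by Lemma~\ref{lemma:parden_matching} (this is packaged as Corollary~\ref{corollary:bounded_generalized_matching}). Then Theorem~\ref{thm:matching_number} applied to $G'$ gives $\eps_k(G)\le\eps_k(G')\le k(2k-1)+\lfloor k/2\rfloor=2k^2-\lceil k/2\rceil$. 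In other words, the right ``subgraph to throw away'' is characterised not by its matching structure but by the spectral condition $\eps_k\le 0$; the drop in matching number is then a consequence, not the selection criterion. Your Ky Fan step is morally the same as Lemma~\ref{lemma:excess}, but you are missing the reduction that guarantees a suitable $H$ exists.
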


Note that Corollary \ref{cor:weak_brouwer} is weaker than Theorem \ref{thm:weak_brouwer_improved} for large values of $k$, but improves upon it for $k\le 139$.

The paper is organized as follows. In Section \ref{sec:prel}, we introduce some background material on Laplacian eigenvalues, orientability, arboricity, star arboricity, and matching theory, which we will later use. In Section \ref{sec:structure}, we study the structure of graphs with low partition density, and we prove Theorem \ref{thm:sa} and our main result, Theorem \ref{thm:weak_brouwer_improved}.
In Section \ref{sec:matchings}, we present the proofs of Theorem \ref{thm:matching_number} and Corollary \ref{cor:weak_brouwer}. Finally, in Section \ref{sec:conc}, we present some related conjectures and discuss possible directions for future research.

\section{Preliminaries}\label{sec:prel}

\subsection{Basic eigenvalue bounds}

For a graph $G=(V,E)$ and  $1\le k \le |V|$, we define
\[
    \eps_k(G)= \left(\sum_{i=1}^k \lambda_i(L(G))\right)-|E|.
\]
For convenience, for $k>|V|$, we define $\eps_k(G)=\left(\sum_{i=1}^{|V|}\lambda_i(L(G))\right)- |E|=|E|$. 

We will need the following basic facts about Laplacian eigenvalues of graphs.

\begin{lemma}[See {\cite[Proposition 1.3.6]{brouwer2012book}}]\label{lemma:components}
Let $G$ be a graph, and let $G_1,\ldots,G_t$ be its connected components. Then, the spectrum of $L(G)$ is the union of the spectra of $L(G_i)$, for $1\le i\le t$ (in particular, the multiplicity of every eigenvalue $\lambda$ of $L(G)$ is the sum of its multiplicities as an eigenvalue of $L(G_i)$, for $1\le i\le t$).
\end{lemma}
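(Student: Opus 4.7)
The plan is to reduce the statement to the block-diagonal structure of the Laplacian with respect to the decomposition of $V$ into connected components, and then invoke the standard fact that the characteristic polynomial of a block-diagonal matrix factors as the product of characteristic polynomials of the blocks.

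More concretely, write $V=V_1\cupdot V_2\cupdot\cdots\cupdot V_t$, where $V_i$ is the vertex set of $G_i$. I would reorder the vertices so that the vertices of $V_1$ come first, then the vertices of $V_2$, and so on. Then I need to verify two things about $L(G)$ in this basis: first, that every entry $L(G)_{u,v}$ with $u\in V_i$, $v\in V_j$ and $i\ne j$ is zero, and second, that the diagonal block indexed by $V_i$ equals $L(G_i)$. The first point is immediate because there are no edges of $G$ between distinct components. For the second point, the off-diagonal entries within $V_i$ clearly agree with those of $L(G_i)$, and the diagonal entries agree because a vertex $u\in V_i$ has all its neighbors inside $V_i$, so $\deg_G(u)=\deg_{G_i}(u)$. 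Hence $L(G)$ is block-diagonal with diagonal blocks $L(G_1),\ldots,L(G_t)$.

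From the block-diagonal form, the characteristic polynomial factors as
\[
\det(xI-L(G))=\prod_{i=1}^{t}\det(xI-L(G_i)),
\]
so the multiset of roots (i.e., the spectrum with multiplicities) of $L(G)$ is the disjoint union of the spectra of the $L(G_i)$'s. This is exactly the claim. An equivalent route, which I could mention as a sanity check, is to take any eigenvector $v$ of $L(G_i)$ with eigenvalue $\lambda$ and extend it by zeros to a vector $\tilde v$ on $V$; then $L(G)\tilde v=\lambda \tilde v$, and varying $i$ and the eigenvectors of $L(G_i)$ produces an orthogonal basis of $\mathbb{R}^V$ of eigenvectors of $L(G)$ of the correct total multiplicity.

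There is essentially no obstacle here: the whole content is the observation that the Laplacian respects the direct sum decomposition $\mathbb{R}^V=\bigoplus_{i=1}^t \mathbb{R}^{V_i}$. The only mild care needed is the verification that the diagonal entries of the blocks are consistent with $L(G_i)$, which rests on the trivial but crucial fact that within-component and ambient degrees coincide.
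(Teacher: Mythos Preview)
Your proof is correct and is exactly the standard argument; the paper itself does not prove this lemma but simply cites it as a known fact from \cite{brouwer2012book}, so there is nothing to compare.
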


\begin{lemma}[Anderson, Morley \cite{anderson1985eigenvalues}; see also {\cite[Section 1.3.2]{brouwer2012book}}]\label{lemma:n_bound}
    Let $G=(V,E)$ be a graph. Then,
    \[
        \lambda_1(L(G))\le |V|.
    \]
\end{lemma}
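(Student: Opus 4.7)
The plan is to compare $L(G)$ with the Laplacian of the complete graph $K_{|V|}$ on the same vertex set via the Rayleigh quotient. Since $\lambda_1(L(G)) = \max_{x \neq 0} x^\T L(G) x / x^\T x$, the claim will follow from the elementary fact that for every $x \in \Rea^{V}$, $x^\T L(G) x \le |V| \cdot \|x\|^2$.

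First I would recall the standard edge expansion of the quadratic form,
\[
x^\T L(G) x = \sum_{\{u,v\}\in E} (x_u-x_v)^2,
\]
which holds for any graph $G=(V,E)$ and any vector $x\in\Rea^V$ (it can be verified directly from the definition of $L(G)$ given in the Introduction). Because each summand is nonnegative and $E \subseteq \binom{V}{2}$, one immediately gets the monotonicity
\[
x^\T L(G)\, x \;\le\; \sum_{\{u,v\}\in \binom{V}{2}} (x_u-x_v)^2 \;=\; x^\T L(K_{|V|})\, x.
\]

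Next I would evaluate the right-hand side directly. Setting $n=|V|$ and expanding the sum over all unordered pairs yields
\[
\sum_{\{u,v\}\in \binom{V}{2}} (x_u-x_v)^2 \;=\; n\sum_{v\in V} x_v^2 \;-\; \Bigl(\sum_{v\in V} x_v\Bigr)^2 \;\le\; n\,\|x\|^2.
\]
Combining the two displays gives $x^\T L(G)\, x \le n\,\|x\|^2$ for every $x\in\Rea^V$, and the Rayleigh characterization finishes the proof.

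There is no real obstacle here; the argument is a two-line Rayleigh quotient computation. One could equivalently phrase it as observing that $L(K_n) - L(G) = L(\overline{G})$ is itself a graph Laplacian and hence positive semidefinite, so $L(G) \preceq L(K_n)$, whence Weyl's inequality gives $\lambda_1(L(G)) \le \lambda_1(L(K_n)) = n$. I would pick whichever phrasing fits the exposition of Section \ref{sec:prel} best; the Rayleigh quotient version is self-contained and uses only the definition of $L(G)$, so it is probably preferable at this point in the paper.
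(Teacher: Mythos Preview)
Your argument is correct: the Rayleigh quotient comparison with $L(K_{|V|})$ (equivalently, the observation that $L(K_{|V|})-L(G)=L(\overline{G})\succeq 0$) yields $\lambda_1(L(G))\le \lambda_1(L(K_{|V|}))=|V|$ cleanly and self-containedly.

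There is nothing to compare against, however: the paper does not prove this lemma. It is stated as a known result attributed to Anderson and Morley \cite{anderson1985eigenvalues} (with a pointer to \cite[Section~1.3.2]{brouwer2012book}) and used as a black box. Your proposal would make the paper more self-contained at essentially no cost, so if you wish to include a proof it is a fine addition; just be aware that the authors chose to cite rather than reprove.
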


Recall that, for a graph $G=(V,E)$, we denote by $n'(G)$ the number of vertices in the largest connected component of $G$.

\begin{corollary}\label{cor:lambda_1_le_n}
    Let $G=(V,E)$ be a graph. Then,
    \[
    \lambda_1(L(G))\le n'(G).
    \]
\end{corollary}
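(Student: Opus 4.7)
The plan is to combine Lemma \ref{lemma:components} with Lemma \ref{lemma:n_bound}, applied to each connected component individually.

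First I would let $G_1,\ldots,G_t$ be the connected components of $G$, and write $V_i=V(G_i)$. By Lemma \ref{lemma:components}, the multiset of eigenvalues of $L(G)$ is the union of the multisets of eigenvalues of the $L(G_i)$. In particular,
\[
\lambda_1(L(G))=\max_{1\le i\le t}\lambda_1(L(G_i)).
\]

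Next, I would apply Lemma \ref{lemma:n_bound} to each component individually: $\lambda_1(L(G_i))\le |V_i|$ for every $1\le i\le t$. Combining this with the previous display gives
\[
\lambda_1(L(G))\le \max_{1\le i\le t} |V_i|=n'(G),
\]
by definition of $n'(G)$ as the number of vertices in the largest connected component of $G$.

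There is no real obstacle here, since both ingredients are stated as preceding lemmas and the corollary is essentially a direct consequence of passing from a bound on the whole Laplacian spectrum to the sharper bound obtained by decomposing into components. The only point worth checking carefully is the reduction to a single component, which is immediate from Lemma \ref{lemma:components}.
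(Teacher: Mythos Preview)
Your proof is correct and follows essentially the same approach as the paper's own proof: decompose into connected components via Lemma \ref{lemma:components}, apply Lemma \ref{lemma:n_bound} to each component, and take the maximum.
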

\begin{proof}
    Let $G_1=(V_1,E_1),\ldots,G_t=(V_t,E_t)$ be the connected components of $G$. By Lemma \ref{lemma:components},
    \[
    \lambda_1(L(G))= \max_{1\le i\le t} \lambda_1(L(G_i)).
    \]
    By Lemma \ref{lemma:n_bound}, $\lambda_1(L(G_i))\le |V_i|\le n'(G)$ for all $1\le i\le t$. So, $\lambda_1(L(G))\le n'(G)$, as wanted.
\end{proof}

\begin{corollary}\label{cor:n'_bound}
 Let $G=(V,E)$ be a graph. Then, for $1\le k\le |V|$,
 \[
    \eps_k(G)\le k\cdot n'(G)-|E|.
 \]
\end{corollary}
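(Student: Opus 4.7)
The plan is to deduce this immediately from Corollary \ref{cor:lambda_1_le_n}. Unwinding the definition, $\eps_k(G) \le k\cdot n'(G) - |E|$ is equivalent to $\sum_{i=1}^k \lambda_i(L(G)) \le k\cdot n'(G)$, so it suffices to bound each of the $k$ largest Laplacian eigenvalues by $n'(G)$.

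First I would note that, since the eigenvalues are ordered in decreasing order, $\lambda_i(L(G)) \le \lambda_1(L(G))$ for every $1 \le i \le k$. Then, applying Corollary \ref{cor:lambda_1_le_n}, we get $\lambda_i(L(G)) \le n'(G)$ for each such $i$. Summing over $i = 1, \ldots, k$ yields $\sum_{i=1}^k \lambda_i(L(G)) \le k\cdot n'(G)$, and subtracting $|E|$ from both sides gives the claimed bound. There is no real obstacle here; the corollary is a one-line consequence of the previous corollary together with the trivial monotonicity of the ordered eigenvalue sequence.
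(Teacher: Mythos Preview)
Your proposal is correct and is essentially identical to the paper's own proof: the paper also bounds $\sum_{i=1}^k \lambda_i(L(G)) \le k\cdot \lambda_1(L(G))\le k\cdot n'(G)$ via Corollary~\ref{cor:lambda_1_le_n} and then subtracts $|E|$.
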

\begin{proof}
    By Corollary \ref{cor:lambda_1_le_n},
    $
\eps_k(G)=\sum_{i=1}^k \lambda_i(L(G)) - |E| \le k\cdot \lambda_1(L(G))-|E|\le k\cdot n'(G)-|E|.
    $
\end{proof}

The following two results are implicit in \cite{haemers2010onthesum}. For completeness, we include here their proofs.

We say that the graphs $G_1=(V_1,E_1),\ldots,G_t=(V_t,E_t)$ are \emph{edge-disjoint} if $E_i\cap E_j=\emptyset$ for all $1\le i<j\le t$. The \emph{union} of $G_1,\ldots,G_t$ is the graph $G=(V,E)$,  where $V=\cup_{i=1}^t V_i$ and $E=\cup_{i=1}^t E_i$.

\begin{lemma}[See {\cite[Corollary 1]{haemers2010onthesum}}]
\label{lemma:excess}
    Let $G_1,\ldots, G_t$ be edge-disjoint graphs, and let $G=(V,E)$ be their union.
    Then, for all $1\leq k\leq |V|$, 
    \[
        \eps_k(G)\le \sum_{i=1}^t \eps_k(G_i).
    \]
\end{lemma}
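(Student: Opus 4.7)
The plan is to exploit subadditivity of the Ky Fan $k$-norm on Hermitian matrices, applied to the natural decomposition $L(G)=\sum_{i=1}^t L(G_i)$.

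First I would extend each $L(G_i)$ to a symmetric $|V|\times|V|$ matrix $\widetilde{L}(G_i)$ by padding rows and columns of zeros for vertices of $V\setminus V_i$; this does not change the nonzero spectrum, and the eigenvalues of $\widetilde{L}(G_i)$ are exactly the eigenvalues of $L(G_i)$ together with $|V|-|V_i|$ extra zero eigenvalues. Because the graphs $G_i$ are edge-disjoint, the defining formula of the Laplacian gives
\[
L(G)=\sum_{i=1}^t \widetilde{L}(G_i),
\]
and edge-disjointness also yields $|E|=\sum_{i=1}^t |E(G_i)|$.

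Next I would invoke the Ky Fan inequality: for Hermitian matrices $A,B$ of the same size, $\sum_{j=1}^k \lambda_j(A+B)\le \sum_{j=1}^k \lambda_j(A)+\sum_{j=1}^k\lambda_j(B)$. Iterating this $t-1$ times gives
\[
\sum_{j=1}^k \lambda_j(L(G))\le \sum_{i=1}^t\sum_{j=1}^k \lambda_j(\widetilde{L}(G_i)).
\]
Because padding only introduces zero eigenvalues, $\sum_{j=1}^k \lambda_j(\widetilde{L}(G_i))$ equals $\sum_{j=1}^k \lambda_j(L(G_i))$ when $k\le |V_i|$, and equals $\sum_{j=1}^{|V_i|}\lambda_j(L(G_i))=2|E(G_i)|$ when $k>|V_i|$. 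In either case, $\sum_{j=1}^k\lambda_j(\widetilde{L}(G_i))-|E(G_i)|$ matches the definition of $\eps_k(G_i)$ (using the convention $\eps_k(G_i)=|E(G_i)|$ when $k>|V_i|$).

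Subtracting $|E|=\sum_{i=1}^t |E(G_i)|$ from both sides of the Ky Fan bound therefore yields
\[
\eps_k(G)=\sum_{j=1}^k \lambda_j(L(G))-|E|\le \sum_{i=1}^t\Big(\sum_{j=1}^k \lambda_j(\widetilde{L}(G_i))-|E(G_i)|\Big)=\sum_{i=1}^t \eps_k(G_i),
\]
as desired. There is no real obstacle here; the only care needed is the bookkeeping for graphs $G_i$ with fewer than $k$ vertices, handled by the padding and the convention for $\eps_k$.
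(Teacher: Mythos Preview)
Your proof is correct and follows essentially the same approach as the paper: the paper phrases your zero-padding step as ``adding isolated vertices to each $G_i$ does not affect $\eps_k(G_i)$,'' then applies Ky Fan's inequality to $L(G)=\sum_i L(G_i)$ and subtracts $|E|=\sum_i |E_i|$ exactly as you do. Your explicit case analysis for $k>|V_i|$ is just a more detailed version of the paper's remark that isolated vertices leave $\eps_k$ unchanged.
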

\begin{proof}

    Since adding isolated vertices does not affect $\eps_k(G_i)$, we may assume all graphs $G_i$ have the same vertex set $V$. For $1\le i\le t$, let $E_i$ be the edge set of $G_i$. Then, we have $G=(V,E)$, where  $E=\cup_{i=1}^t E_i$.
    Note that $L(G)=\sum_{i=1}^t L(G_i)$. Hence, Ky Fan's inequality (\cite{fan1949onatheorem}, see also \cite[Proposition A.6]{marshall2011inequalities}),
    \[
        \sum_{j=1}^k \lambda_j(L(G)) \le \sum_{i=1}^t \sum_{j=1}^k \lambda_j(L(G_i)).
    \]
    Therefore, using the fact that $|E|=\sum_{i=1}^t |E_i|$, we obtain
    \[
        \eps_k(G) =   \sum_{j=1}^k \lambda_j(L(G))- |E| \le \sum_{i=1}^t \left( \left(\sum_{j=1}^k \lambda_j(L(G_i))\right)- |E_i|\right) = \sum_{i=1}^t \eps_k(G_i).
    \]
\end{proof}

We say that a graph $G=(V,E)$ is \emph{non-empty} if $|E|>0$. The following result is a simple consequence of Lemma \ref{lemma:excess}.

\begin{lemma}[See {\cite[Lemma 5]{haemers2010onthesum}}, {\cite[Corollary 2.5]{wang2012onaconjecture}}]
\label{lem:removable_graphs}
    Let $G=(V,E)$ be a graph, and let $1\leq k\leq |V|$. Then, there exists a subgraph $G'$ of $G$ such that $\eps_k(G')\ge \eps_k(G)$, and every non-empty subgraph $H$ of $G'$ satisfies $\eps_k(H)>0$.
\end{lemma}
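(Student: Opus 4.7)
The plan is to apply an iterative edge-removal argument driven by Lemma \ref{lemma:excess}. Start with $G_0=G$. At each step $i\geq 0$, if there exists a non-empty subgraph $H_i$ of $G_i$ with $\eps_k(H_i)\leq 0$, let $G_{i+1}$ be the subgraph of $G_i$ obtained by deleting the edges of $H_i$ (keeping the same vertex set). Otherwise, stop and set $G'=G_i$.

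To see that this process yields a valid $G'$, observe that $G_i$ is the union of the edge-disjoint graphs $G_{i+1}$ and $H_i$. Hence, by Lemma \ref{lemma:excess},
\[
\eps_k(G_i)\leq \eps_k(G_{i+1})+\eps_k(H_i)\leq \eps_k(G_{i+1}),
\]
where the last inequality uses $\eps_k(H_i)\leq 0$. By induction, $\eps_k(G_{i+1})\geq \eps_k(G_0)=\eps_k(G)$ at every step. Moreover, since $H_i$ is non-empty, $|E(G_{i+1})|<|E(G_i)|$, so the process must terminate after finitely many steps. The terminal graph $G'$ is a subgraph of $G$ with $\eps_k(G')\geq \eps_k(G)$, and by construction, it admits no non-empty subgraph $H$ with $\eps_k(H)\leq 0$; equivalently, every non-empty subgraph $H$ of $G'$ satisfies $\eps_k(H)>0$, as desired.

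The only mild subtlety is making sure the edge-deletion step preserves the setup: $G_{i+1}$ is indeed a subgraph of $G$ (transitivity of the subgraph relation), and any non-empty subgraph of $G_{i+1}$ is a non-empty subgraph of $G_i$, so the termination condition is well-posed. I do not anticipate a genuine obstacle here — the result is essentially a restatement of Lemma \ref{lemma:excess} combined with a finiteness argument on the edge set.
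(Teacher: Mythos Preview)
Your proof is correct and follows essentially the same approach as the paper's: both arguments repeatedly remove a non-empty subgraph $H$ with $\eps_k(H)\le 0$ and invoke Lemma~\ref{lemma:excess} to see that $\eps_k$ does not decrease, terminating because the edge count strictly drops. The only cosmetic difference is that the paper phrases this as an induction on $|E|$ while you phrase it as an explicit iterative process.
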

\begin{proof}
    We argue by induction on the size of $E$. If $|E|=0$, the claim holds trivially.  
    Assume $|E|>0$, and assume that the claim holds for all graphs with less than $|E|$ edges. If every non-empty subgraph $H$ of $G$ satisfies $\eps_k(H)>0$, we may take $G'=G$, and we are done.

    Otherwise, there is a non-empty subgraph $H$ of $G$ such that $\eps_k(H)\le 0$. Let $\tilde{G}$ be the graph obtained by removing from $G$ the edges of $H$. By Lemma \ref{lemma:excess}, we have
    \[
        \eps_k(G)\le \eps_k(\tilde{G})+\eps_k(H)\le \eps_k(\tilde{G}).
    \]
    Since the number of edges of $\tilde{G}$ is strictly smaller than $|E|$, by the induction hypothesis there exists a subgraph $G'$ of $\tilde{G}$ such that $\eps_k(H')>0$ for all non-empty subgraphs $H'$ of $G'$, and
    \[
        \eps_k(G')\ge \eps_k(\tilde{G})\ge \eps_k(G).
    \]
    Since $G'$ is also a subgraph of $G$, we are done.
\end{proof}

As an application of Lemma \ref{lem:removable_graphs}, we obtain the following key result.

\begin{lemma}\label{lemma:sparse_subgraph}
    Let $G=(V,E)$ be a graph, and let $1\le k\le |V|$. Then, there exists a subgraph $G'$ of $G$ such that $\parden(G')< k$ and $\eps_k(G')\ge \eps_k(G)$.
\end{lemma}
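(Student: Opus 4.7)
The plan is to combine Lemma \ref{lem:removable_graphs} with Corollary \ref{cor:n'_bound} directly. Apply Lemma \ref{lem:removable_graphs} to obtain a subgraph $G'$ of $G$ satisfying $\eps_k(G')\ge \eps_k(G)$ with the additional property that every non-empty subgraph $H$ of $G'$ has $\eps_k(H)>0$. I claim this $G'$ already has $\parden(G')<k$.

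To verify this, let $H$ be an arbitrary subgraph of $G'$ with at least one vertex; I need to check that $|E(H)|/n'(H)<k$. If $H$ has no edges, then $|E(H)|/n'(H)=0<k$ since $k\ge 1$. Otherwise $H$ is non-empty, so by the property of $G'$ we have $\eps_k(H)>0$. On the other hand, Corollary \ref{cor:n'_bound} (applied with the same $k$, which is fine since adding isolated vertices does not change $\eps_k$, so we may assume $k\le |V(H)|$ or interpret $\eps_k(H)$ via the convention $\eps_k(H)=|E(H)|$ when $k>|V(H)|$, in which case the bound $\eps_k(H)\le k\cdot n'(H)-|E(H)|$ reduces to $2|E(H)|\le k\cdot n'(H)$, which still gives $|E(H)|<k\cdot n'(H)$) yields
\[
0<\eps_k(H)\le k\cdot n'(H)-|E(H)|,
\]
and hence $|E(H)|/n'(H)<k$. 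Taking the maximum over all subgraphs $H$ of $G'$ with at least one vertex gives $\parden(G')<k$, as required.

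There is essentially no obstacle here: the whole argument is a two-line combination of the previously established facts. The only thing to be careful about is the corner case where $H$ has no edges (handled above) and the convention for $\eps_k$ when $k$ exceeds the number of vertices of the chosen subgraph; both only strengthen the inequality, so the conclusion $\parden(G')<k$ holds in all cases.
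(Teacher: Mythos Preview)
Your proof is correct and follows essentially the same approach as the paper: apply Lemma~\ref{lem:removable_graphs} to obtain $G'$, then use Corollary~\ref{cor:n'_bound} to show that any subgraph $H$ of $G'$ with $|E(H)|\ge k\cdot n'(H)$ would have $\eps_k(H)\le 0$, contradicting the defining property of $G'$. The paper phrases this by contradiction rather than directly, and glosses over the corner case $k>|V(H)|$ that you handle explicitly, but these are only cosmetic differences.
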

\begin{proof}
By Lemma \ref{lem:removable_graphs}, there exists a subgraph $G'$ of $G$ with $\eps_k(G')\ge \eps_k(G)$ such that $\eps_k(H)>0$ for every non-empty subgraph $H$ of $G'$. Assume for contradiction that $\parden(G')\ge k$. Then, by \eqref{eq:parden1},  there exists a subgraph $H$ of $G'$ with at least one vertex for which $|E(H)|\ge k\cdot n'(H)$. Clearly, $H$ is non-empty. By Corollary \ref{cor:n'_bound}, we have
\[
    \eps_k(H) \le k \cdot n'(H)-|E(H)| \le 0,
\]
a contradiction. Therefore, $\parden(G')< k$.
\end{proof}

\subsection{Orientability, arboricity, and star arboricity}
\label{sec:prelims_orientability}

Let $G=(V,E)$ be a graph, and let $h:E\to V$ such that $h(e)\in e$ for all $e\in E$. The \emph{orientation} of $G$ associated to $h$ is the directed graph $D=(V,A)$, where
\[
    A= \left\{ \vv{uv}:\, \{u,v\}\in E, \, h(\{u,v\})=v\right\}.
\]
The \emph{in-degree} of a vertex $v$ in $D$ is the number of directed edges of the form $\vv{uv}$, for some $u\in V$. If the in-degree of every vertex is at most $k$, we say that $D$ is a \emph{$k$-orientation} of $G$.  We say that $G$ is \emph{$k$-orientable} if there exists a $k$-orientation of $G$.\footnote{The notion of $k$-orientability is often defined in terms of the existence of an orientation with out-degrees bounded by $k$ (instead of in-degrees). These two definitions are clearly equivalent; we chose to define $k$-orientability in terms of the in-degrees in order to be consistent with the terminology in \cite{alon1992star}.}

We define the \emph{density} of a graph $G=(V,E)$ as
\[
    \den(G)= \max \left\{ \frac{|E(G[U])|}{|U|} :\, \emptyset\ne U\subset V\right\}.
\]
Clearly, $\den(G)\le \parden(G)$ for every graph $G$. The following theorem states that the $k$-orientable graphs are exactly the graphs with density at most $k$.

\begin{theorem}[Hakimi {\cite[Theorem 4]{hakimi1965onthedegrees}}, Frank, Gy\'arf\'as {\cite[Theorem 1]{frank1978orient}}]\label{thm:frank}
    Let $G=(V,E)$ be a graph. Then, $G$ is $k$-orientable if and only if $\den(G)\le k$.
\end{theorem}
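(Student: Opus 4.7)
I would prove the two directions of the biconditional separately. The forward direction is straightforward. Assume $G$ admits a $k$-orientation $D=(V,A)$ and let $U\subset V$ be non-empty. Each edge $\{u,v\}\in E(G[U])$ becomes an arc in $D$ whose head lies in $U$; distinct edges give rise to distinct arcs, so
\[
|E(G[U])|\;\le\;\sum_{v\in U}\#\{\vv{xv}\in A\}\;\le\; k|U|.
\]
Dividing by $|U|$ and taking the maximum over non-empty $U$ yields $\den(G)\le k$.

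For the reverse direction I would reduce the problem to Hall's marriage theorem by introducing an auxiliary bipartite graph $B$ with parts $E$ and $V\times\{1,\ldots,k\}$, where $e=\{u,v\}\in E$ is joined in $B$ to each of the $2k$ vertices $(u,1),\ldots,(u,k),(v,1),\ldots,(v,k)$. A matching of $B$ that saturates the side $E$ is equivalent to a function $h\colon E\to V$ satisfying $h(e)\in e$ and $|h^{-1}(v)|\le k$ for every $v\in V$, which is exactly a $k$-orientation of $G$ (with $h(e)$ playing the role of the head of $e$).

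It remains to verify Hall's condition on $B$. For any $S\subseteq E$, let $V(S)\subset V$ denote the set of endpoints of edges in $S$. By construction, $N_B(S)=V(S)\times\{1,\ldots,k\}$, so $|N_B(S)|=k|V(S)|$. Since $S\subseteq E(G[V(S)])$ and the hypothesis $\den(G)\le k$ applied to $U=V(S)$ gives $|E(G[V(S)])|\le k|V(S)|$, we obtain $|S|\le k|V(S)|=|N_B(S)|$. Hence Hall's condition holds, a saturating matching exists, and $G$ admits a $k$-orientation.

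The main conceptual step is the setup: recognizing that the in-degree constraint $|h^{-1}(v)|\le k$ can be encoded by cloning each vertex $k$ times on one side of a bipartite graph, which reduces a global orientation problem to a matching problem on $B$. Once this reduction is in place, the density hypothesis translates directly into Hall's condition via the identity $N_B(S)=V(S)\times\{1,\ldots,k\}$, so there is no further combinatorial obstacle.
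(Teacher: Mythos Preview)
The paper does not give a proof of this theorem; it is quoted as a known result with references to Hakimi and to Frank--Gy\'arf\'as, so there is no in-paper argument to compare against.

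That said, your proof is correct and is in fact the standard one. The forward direction is immediate, and for the converse your reduction to Hall's theorem via the bipartite graph on $E$ versus $V\times\{1,\dots,k\}$ is exactly the vertex-duplication trick the paper itself alludes to elsewhere (see the remark preceding Proposition~\ref{prop:hall}, where the same device is used to pass from Hall's theorem to the $\ell$-star version). Your verification of Hall's condition is clean: $N_B(S)=V(S)\times\{1,\dots,k\}$, and $|S|\le |E(G[V(S)])|\le k|V(S)|$ follows directly from $\den(G)\le k$. No gaps.
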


The \emph{arboricity} of a graph $G=(V,E)$, denoted by $\arbor(G)$, is the minimum integer $t$ such that $G$ can be decomposed into $t$ edge-disjoint forests. Nash-Williams showed in \cite{nash-williams1964decomposition} that 
\begin{equation}\label{eq:nash_williams}
    a(G)= \max \left\{ \left\lceil\frac{|E(G[U])|}{|U|-1}\right\rceil :\, U\subset V,\, |U|\ge 2 \right\}.
\end{equation}
Since, for every $U\subset V$ of size at least $2$, 
\[
\frac{|E(G[U])|}{|U|-1}-\frac{|E(G[U])|}{|U|} =\frac{|E(G[U])|}{|U|(|U|-1)}\le \frac{1}{2},
\]
we obtain, by \eqref{eq:nash_williams},
\begin{equation}
    \label{eq:rho_arb}
    a(G)\le \left\lceil \rho(G)+1/2 \right\rceil.
\end{equation}

Recall that a star forest is a graph whose connected components are all isomorphic to star graphs, and the star arboricity of a graph $G$, denoted by $\sa(G)$, is the minimum $t$ such that $G$ can be decomposed into $t$ edge-disjoint star forests.  
Clearly, $\arbor(G)\le \sa(G)$. On the other hand, it was show by Algor and Alon in \cite[Lemma 4.1]{algor1989thestar} that, for every graph $G$,
\begin{equation}\label{eq:sa_at_most_twice_a}
    \sa(G)\le 2\cdot \arbor(G).
\end{equation}
Alon, McDiarmid and Reed \cite{alon1992star}, and independently Kurek \cite{kurek1992arboricity}, presented examples showing that this bound is sharp.

The following lemma gives an upper bound on $\eps_k(G)$ in terms of the star arboricity of $G$.

\begin{lemma}\label{lemma:star_arboricity_bound}
    Let $G=(V,E)$ be a graph, and let $1\le k\le |V|$. Then,
    \[
    \eps_k(G)\le k\cdot \sa(G).
    \]
\end{lemma}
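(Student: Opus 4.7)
The plan is to reduce the inequality to a per-forest bound via the decomposition into star forests, and then verify the bound on each piece by direct inspection of the spectrum. Let $t = \sa(G)$. By the definition of star arboricity, there exist edge-disjoint star forests $F_1, \ldots, F_t$ whose union is $G$. Applying Lemma \ref{lemma:excess} gives
\[
\eps_k(G) \le \sum_{i=1}^t \eps_k(F_i),
\]
so it suffices to prove the single-forest bound $\eps_k(F) \le k$ for every star forest $F$.

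To establish this bound, I will compute the Laplacian spectrum of $F$ explicitly. A direct calculation shows that the Laplacian of the star $K_{1,m}$ has eigenvalues $m+1$ (once), $1$ (with multiplicity $m-1$), and $0$ (once). Using Lemma \ref{lemma:components}, if the nontrivial components of $F$ are $K_{1,m_1}, \ldots, K_{1,m_r}$, ordered so that $m_1 \ge \cdots \ge m_r \ge 1$, then the Laplacian spectrum of $F$ consists of the values $m_j+1$ for $j=1,\ldots,r$, together with $|E(F)|-r$ copies of $1$ and enough zeros to fill out $|V(F)|$ eigenvalues.

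From here a short case analysis in $k$ finishes the argument. If $k \le r$, the top $k$ eigenvalues are $m_1+1, \ldots, m_k+1$, summing to $k + \sum_{j=1}^k m_j \le k + |E(F)|$. If $r < k \le |E(F)|$, the top $k$ eigenvalues consist of all $r$ values $m_j+1$ together with $k-r$ of the eigenvalues $1$, summing to exactly $k + |E(F)|$. If $|E(F)| < k \le |V(F)|$, the sum of the top $k$ eigenvalues is just the trace $2|E(F)|$, which is at most $k+|E(F)|$. Finally, if $k > |V(F)|$, then by the convention in Section \ref{sec:prel} and since $F$ is a forest, $\eps_k(F) = |E(F)| \le |V(F)|-1 < k$. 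In every case $\eps_k(F) \le k$, and summing over the $t = \sa(G)$ star forests gives the desired conclusion. I do not anticipate a real obstacle: the spectrum of a star is textbook, Lemma \ref{lemma:components} delivers the spectrum of $F$ for free, and the only mild point of care is the boundary cases $k > |E(F)|$, which are handled cleanly by the $\eps_k$ convention.
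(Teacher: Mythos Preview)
Your proof is correct and follows essentially the same approach as the paper: reduce to the per-forest bound $\eps_k(F)\le k$ via Lemma~\ref{lemma:excess}, compute the Laplacian spectrum of a star forest from the star spectrum and Lemma~\ref{lemma:components}, and finish with a short case analysis in $k$. The only cosmetic differences are notation ($K_{1,m}$ versus $S_n$) and that you split off the convention case $k>|V(F)|$ explicitly, which the paper handles implicitly.
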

\begin{proof}
    Let $n\ge 2$, and let $S_n$ be the star graph with $n$ vertices. It is well-known and easy to check (see, for example, \cite{anderson1985eigenvalues} or \cite[Lemma 2]{haemers2010onthesum}) that the Laplacian eigenvalues of $S_n$ are $n$ with multiplicity $1$, $1$ with multiplicity $n-2$, and $0$ with multiplicity $1$. 

    Let $F$ be a star forest with connected components isomorphic to $S_{n_1},\ldots,S_{n_t}$, respectively, for some $n_1\ge n_2 \ge \ldots\ge n_t$. Since removing isolated vertices does not affect $\eps_k(F)$, we may assume that $n_i\ge 2$ for all $1\le i\le t$. Then, by Lemma \ref{lemma:components}, the Laplacian eigenvalues of $F$ are $n_1,\ldots,n_t$, each repeated once, $1$ with multiplicity $n_1+\cdots+n_t-2t$, and $0$ with multiplicity $t$. 
    Hence, for $k\ge 1$,
    \[
        \sum_{i=1}^k \lambda_i(L(F))=\begin{cases}
    n_1+\cdots+n_k   & \text{if } k\le t,\\
    n_1+\cdots+n_t+(k-t) & \text{if } t<k\le n_1+\cdots+n_t-t,\\
    2(n_1+\cdots+n_t)-2t & \text{otherwise.}
        \end{cases}
    \]
    In all cases, we have
    \[  
        \sum_{i=1}^k \lambda_i(L(F)) \le n_1+\cdots+n_t+(k-t)
        = |E(F)|+k.
    \]
    Thus, $\eps_k(F)\le k$ for all $k\ge 1$. 
     Finally, let $m=\sa(G)$, and let $F_1,\ldots,F_m$ be a decomposition of $G$ into edge-disjoints star forests. Then, by Lemma \ref{lemma:excess},
     \[
        \eps_k(G)\le \sum_{i=1}^m \eps_k(F_i)\le k\cdot m= k\cdot \sa(G).
     \]
    
\end{proof}

As a consequence, we obtain the following result, which was proved by Das, Mojallal and Gutman in \cite{das2015energy}. 

\begin{corollary}[Das, Mojallal, Gutman \cite{das2015energy}]
\label{cor:covering}
    Let $G=(V,E)$ be a graph, and let $1\le k\le |V|$. Then,
    \[
    \eps_k(G)\le k\cdot \tau(G).
    \]
\end{corollary}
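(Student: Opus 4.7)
The plan is to deduce this as an immediate consequence of Lemma \ref{lemma:star_arboricity_bound} by proving the elementary inequality $\sa(G)\le \tau(G)$ for every graph $G$. Once this is established, the corollary follows from
\[
    \eps_k(G)\le k\cdot \sa(G)\le k\cdot \tau(G).
\]

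To prove $\sa(G)\le \tau(G)$, I would argue as follows. Let $C=\{v_1,\ldots,v_\tau\}\subset V$ be a minimum vertex cover of $G$, so that $\tau=\tau(G)$. For each edge $e\in E$, the defining property of a vertex cover guarantees that $C\cap e\ne\emptyset$, so I can pick an endpoint $h(e)\in C\cap e$. For $1\le i\le \tau$, let $F_i=\{e\in E:\, h(e)=v_i\}$. Every edge in $F_i$ is incident to $v_i$, so the graph with edge set $F_i$ is a star centered at $v_i$ (together with possibly some isolated vertices), hence a star forest. The sets $F_1,\ldots,F_\tau$ partition $E$, giving a decomposition of $G$ into $\tau(G)$ edge-disjoint star forests. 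Therefore $\sa(G)\le \tau(G)$.

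There is no real obstacle here; the only subtlety is making sure the single-star graph centered at $v_i$ is accepted as a star forest under the paper's definition (a graph whose connected components are all isomorphic to star graphs), which it plainly is. Combining $\sa(G)\le\tau(G)$ with Lemma \ref{lemma:star_arboricity_bound} yields the stated bound $\eps_k(G)\le k\cdot \tau(G)$.
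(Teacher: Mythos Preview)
Your proposal is correct and follows essentially the same route as the paper: the paper also deduces the corollary from Lemma \ref{lemma:star_arboricity_bound} together with the elementary bound $\sa(G)\le \tau(G)$, proved by taking a minimum vertex cover $\{v_1,\ldots,v_\tau\}$ and partitioning $E$ into stars centered at the $v_i$. The only cosmetic difference is that the paper assigns each edge to the first cover vertex it contains rather than via an arbitrary choice function $h$.
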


Corollary \ref{cor:covering} follows immediately from Lemma \ref{lemma:star_arboricity_bound} and the following simple lemma. The argument below is essentially the same as the one in \cite{das2015energy}, but we include it here for completeness.

\begin{lemma}\label{lemma:sa_at_most_tau}
    Let $G=(V,E)$ be a graph. Then, $\sa(G)\le \tau(G)$.
\end{lemma}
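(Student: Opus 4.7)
The plan is to produce an explicit decomposition of $G$ into $\tau(G)$ edge-disjoint star forests, using a minimum vertex cover as the set of star centers. The key observation driving the argument is that a star forest is precisely a graph in which the edges can be grouped so that each group shares a common vertex (its center), so a vertex cover is the natural object to exploit: every edge already has a distinguished endpoint in the cover, and we just need to make sure we pick one such endpoint per edge consistently.

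Concretely, I would fix a minimum vertex cover $C=\{v_1,\ldots,v_\tau\}$ of $G$, where $\tau=\tau(G)$. For each edge $e\in E$, since $C$ meets every edge, I can select a distinguished endpoint $h(e)\in e\cap C$; for definiteness, let $h(e)=v_i$ where $i$ is the smallest index with $v_i\in e$. For $1\le i\le \tau$, set
\[
    E_i=\{e\in E:\, h(e)=v_i\}.
\]
By construction, the sets $E_1,\ldots,E_\tau$ partition $E$, and every edge in $E_i$ is incident to $v_i$. Therefore the graph $F_i=(V,E_i)$ has $v_i$ as a common endpoint of all its edges, so its only non-trivial connected component is a star centered at $v_i$; in particular, $F_i$ is a star forest. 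This exhibits $G$ as the union of $\tau$ edge-disjoint star forests, yielding $\sa(G)\le \tau=\tau(G)$.

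There is no real obstacle here: once one notices that assigning each edge to an endpoint in a vertex cover automatically produces a star centered at that vertex, the inequality follows immediately. The only thing worth being slightly careful about is to break ties consistently (e.g., by smallest index) so that each edge is assigned to a single $E_i$, which ensures that the resulting forests are genuinely edge-disjoint.
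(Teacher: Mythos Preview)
Your proof is correct and essentially identical to the paper's: both fix a minimum vertex cover $\{v_1,\ldots,v_\tau\}$, assign each edge to the smallest-index cover vertex it contains, and observe that the resulting edge sets $E_i$ are pairwise disjoint stars centered at $v_i$.
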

\begin{proof}
 Let $C=\{v_1,\ldots,v_t\}$ be a vertex cover of $G$ of size $t=\tau(G)$. For $1\leq i\leq t$, let
    \[
        E_i= \left\{e\in E:\, v_i\in e,\, v_j\notin e \text{ for all } 1\leq j<i\right\},
    \]
    and let $G_i=(V,E_i)$. Note that each of the graphs $G_i$ is a star forest (since each $G_i$ is isomorphic to a star graph, plus, possibly, some isolated vertices). Moreover, the graphs $G_1,\ldots,G_t$ are edge-disjoint, and $G=\cup_{i=1}^t G_i$. Therefore, $\sa(G)\le t= \tau(G)$.
\end{proof}

\begin{remark}
    A result analogous to Lemma \ref{lemma:star_arboricity_bound}, \[
    \eps_k(G)\le (2k-1)\cdot \arbor(G),
    \]
    was observed by Cooper in \cite{cooper2021constraints}. The bound follows from a result of Haemers, Mohammadian and Tayfeh-Rezaie (\cite[Theorem 5]{haemers2010onthesum}), which states that $\eps_k(T)\le 2k-1$ for every tree $T$.
\end{remark}

\subsection{Matching theory}\label{sec:prelims_matching}

Let $\ell\ge 1$ be an integer. Recall that $S_{\ell+1}$ is the star graph with $\ell+1$ vertices (and $\ell$ edges). We say that a vertex $v$ is a \emph{center} of $S_{\ell+1}$ if its degree in $S_{\ell+1}$ is $\ell$.
An \emph{$\ell$-star forest} is a graph $F=(V,E)$ whose connected components are all isomorphic to $S_{\ell+1}$. For $U\subset V$, we say that $F$ is \emph{centered} at $U$ if each connected component of $F$ has a center in $U$.

For a graph $G=(V,E)$, let $\nu_{\ell}(G)$ be the maximum number of connected components in an $\ell$-star forest contained in $G$. For $U\subset V$, let $\nu_{\ell}(G;U)$ be the maximum number of connected components in an $\ell$-star forest centered at $U$ and contained in $G$.
Since a $1$-star forest is just a matching, we have $\nu_1(G)=\nu(G)$.

\begin{lemma}\label{lemma:parden_matching}
 Let $G=(V,E)$ be a graph, and let $\ell\ge 1$. Then,
 \[
    \nu_{\ell}(G)\le\left\lfloor \left(1+\frac{1}{\ell}\right)\parden(G)   \right\rfloor. 
\]  
\end{lemma}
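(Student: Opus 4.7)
The plan is to apply the definition of $\parden(G)$ directly to an extremal $\ell$-star forest contained in $G$. Take an $\ell$-star forest $F \subset G$ with $m := \nu_{\ell}(G)$ connected components. Since each component of $F$ is isomorphic to $S_{\ell+1}$, it has exactly $\ell+1$ vertices and $\ell$ edges, so $F$ has $m(\ell+1)$ vertices altogether and $|E(F)| = m\ell$ edges. Crucially, because every component of $F$ has the same size $\ell+1$, we have $n'(F) = \ell+1$ (assuming $m \geq 1$; the case $m=0$ is trivial).

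By the definition of partition density in \eqref{eq:parden1}, applied to the subgraph $F$ of $G$,
\[
\parden(G) \;\geq\; \frac{|E(F)|}{n'(F)} \;=\; \frac{m\ell}{\ell+1}.
\]
Rearranging gives $m \leq \frac{\ell+1}{\ell}\parden(G) = \left(1+\frac{1}{\ell}\right)\parden(G)$, and since $m = \nu_\ell(G)$ is an integer, we can take the floor, yielding the desired inequality.

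There is essentially no obstacle here: the proof is a one-line application of the definition, once one observes that all connected components of an $\ell$-star forest have the same number of vertices, so the largest component size appearing in the denominator of $\parden$ is fixed at $\ell+1$. This is precisely why $\parden$ (as opposed to $\den$) is the right parameter to bound $\nu_\ell(G)$ — the components of an $\ell$-star forest cover $m(\ell+1)$ vertices but only the size of one of them matters in the definition of $\parden$.
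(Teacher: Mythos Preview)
Your proof is correct and essentially identical to the paper's: take an extremal $\ell$-star forest $F\subset G$, observe $|E(F)|=\ell\,\nu_\ell(G)$ and $n'(F)=\ell+1$, and apply \eqref{eq:parden1}. The paper's version omits the explicit mention of the floor and of the trivial case $m=0$, but the argument is the same.
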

\begin{proof}
Let $F$ be an $\ell$-star forest contained in $G$ with $\nu_{\ell}(G)$ connected components. By \eqref{eq:parden1}, 
\[
    \parden(G) \ge \frac{|E(F)|}{n'(F)} =\frac{\ell \cdot \nu_{\ell}(G)}{\ell+1}.
\]
So,
\[
\nu_{\ell}(G) \le \left(1+\frac{1}{\ell}\right)\parden(G).
\]
\end{proof}

\begin{corollary}\label{corollary:bounded_generalized_matching}
    Let $G=(V,E)$ be a graph. Let $1\le k\le |V|$, and let $\ell\ge 1$. Then, there exists a subgraph $G'$ of $G$ with $\nu_{\ell}(G')<\left(1+1/\ell\right)k$ such that $\eps_k(G')\ge \eps_k(G)$.
\end{corollary}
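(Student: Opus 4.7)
The plan is to obtain the corollary as an immediate consequence of Lemma \ref{lemma:sparse_subgraph} and Lemma \ref{lemma:parden_matching}, which together already do all the work.

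First, I would invoke Lemma \ref{lemma:sparse_subgraph} on $G$ and $k$ to produce a subgraph $G'$ of $G$ such that $\parden(G')<k$ and $\eps_k(G')\ge \eps_k(G)$. This already secures the second conclusion of the corollary; it remains only to bound $\nu_\ell(G')$.

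Next, I would apply Lemma \ref{lemma:parden_matching} to $G'$ with the given $\ell$, which yields
\[
    \nu_\ell(G') \le \left\lfloor\left(1+\tfrac{1}{\ell}\right)\parden(G')\right\rfloor \le \left(1+\tfrac{1}{\ell}\right)\parden(G') < \left(1+\tfrac{1}{\ell}\right)k,
\]
where the last strict inequality uses $\parden(G')<k$ and $1+1/\ell>0$. Combining this with the previous step completes the proof.

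There is no real obstacle here: the corollary is a direct packaging of the two preceding lemmas, and the only thing to watch is that the strict inequality $\parden(G')<k$ produced by Lemma \ref{lemma:sparse_subgraph} translates cleanly through Lemma \ref{lemma:parden_matching} to the strict inequality $\nu_\ell(G')<(1+1/\ell)k$, which it does since multiplication by the positive constant $1+1/\ell$ preserves strict inequalities and the floor only weakens the bound.
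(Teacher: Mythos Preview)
Your proposal is correct and follows essentially the same approach as the paper: apply Lemma~\ref{lemma:sparse_subgraph} to obtain $G'$ with $\parden(G')<k$ and $\eps_k(G')\ge\eps_k(G)$, then use Lemma~\ref{lemma:parden_matching} to deduce $\nu_\ell(G')<(1+1/\ell)k$. The paper's proof is identical, only omitting the explicit mention of the floor.
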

\begin{proof}
    By Lemma \ref{lemma:sparse_subgraph}, there is a subgraph $G'$ of $G$ with $\eps_k(G')\ge \eps_k(G)$ and $\parden(G')<k$. By Lemma \ref{lemma:parden_matching}, we have
    \[
        \nu_{\ell}(G')\le \left(1+\frac{1}{\ell}\right)\parden(G') <\left(1+\frac{1}{\ell}\right)k.
    \]
\end{proof}
Let us note that the special cases $k=2$, $\ell=1, 2$ of Corollary \ref{corollary:bounded_generalized_matching} are implicit in \cite{haemers2010onthesum}.

For a graph $G=(V,E)$ and $V'\subset V$, let $N_G(V')$ be the \emph{neighborhood} of $V'$ in $G$ (that is, the set of vertices in $V\setminus V'$ having at least one neighbor in $V'$).

The following result can be obtained from Hall's Theorem by a standard vertex duplication argument.

\begin{proposition}[Halmos, Vaughan \cite{halmos1950marriage}; see also {\cite[Theorem 4]{mirsky1969hall}}]
\label{prop:hall}
    Let $G=(A,B,E)$ be a bipartite graph, and let $\ell\ge 1$ be an integer. Then, $\nu_{\ell}(G;A)=|A|$ if and only if $|N_G(A')|\ge \ell \cdot |A'|$ for all $A'\subset A$.
\end{proposition}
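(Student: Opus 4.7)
My plan is to reduce Proposition \ref{prop:hall} to the classical Hall marriage theorem via a standard vertex duplication on the $A$-side. Since an $\ell$-star forest centered at $A$ has at most $|A|$ components (one star per center in $A$), the equality $\nu_\ell(G;A)=|A|$ is equivalent to the existence of an $\ell$-star forest in $G$ in which every vertex of $A$ is the center of its own star and the leaf sets of different stars are pairwise disjoint.

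For the necessity direction, I would take any $\ell$-star forest $F\subset G$ centered at $A$ with $|A|$ components; each $a\in A$ is the center of one star contributing $\ell$ private leaves in $B$, and the leaves belonging to different centers are all distinct. For any $A'\subset A$, the $\ell|A'|$ leaves associated with the stars centered at $A'$ are all contained in $N_G(A')$, yielding $|N_G(A')|\ge \ell|A'|$.

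For the sufficiency direction, I would build an auxiliary bipartite graph $\tilde G=(\tilde A,B,\tilde E)$ by replacing each $a\in A$ with $\ell$ formal copies $a^{(1)},\ldots,a^{(\ell)}$, declaring each $a^{(j)}$ adjacent in $\tilde G$ to precisely the $G$-neighbors of $a$. A matching $M$ in $\tilde G$ that saturates $\tilde A$ translates directly to an $\ell$-star forest in $G$ centered at $A$ with $|A|$ components: the $\ell$ partners assigned to $a^{(1)},\ldots,a^{(\ell)}$ form the leaf set of a star centered at $a$, and because $M$ is a matching the leaves across all $a\in A$ are automatically distinct. It therefore suffices to verify Hall's condition in $\tilde G$. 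Given $S\subset \tilde A$, put $A''=\{a\in A:a^{(j)}\in S\text{ for some }j\}$; then $|S|\le \ell|A''|$ and $N_{\tilde G}(S)=N_G(A'')$, so our hypothesis gives $|N_{\tilde G}(S)|=|N_G(A'')|\ge \ell|A''|\ge |S|$. Hall's theorem then furnishes the required matching.

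I do not expect a genuine obstacle: the only mild subtlety is confirming that distinct copies $a^{(j)}$ of the same vertex $a$ are matched to distinct vertices of $B$ (needed so that each center gets $\ell$ leaves), but this is immediate from the definition of a matching, as is the disjointness of leaves across different centers.
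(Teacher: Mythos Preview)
Your argument is correct and is precisely the ``standard vertex duplication argument'' the paper alludes to; the paper does not spell out a proof but merely cites Halmos--Vaughan and Mirsky, noting that the result follows from Hall's theorem by duplicating each vertex of $A$ into $\ell$ copies, which is exactly what you do.
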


We will need the following consequence of Proposition \ref{prop:hall}.

\begin{corollary}\label{cor:hall}
    Let $G=(A,B,E)$ be a bipartite graph, and let $\ell \ge 1$. Assume $\nu_{\ell}(G;A)<|A|$. Then,  there exists $\emptyset\ne A'\subset A$ such that $|A\setminus A'|\le \nu_{\ell}(G; A)$ and  $|N_G(A')|\le \ell\cdot |A'|-1$.
\end{corollary}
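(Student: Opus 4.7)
The plan is to apply Proposition \ref{prop:hall} in both directions. First, since $\nu_\ell(G;A)<|A|$, the forward (``only if'') direction yields some nonempty $A_1\subset A$ with $|N_G(A_1)|\le \ell|A_1|-1$, so the family of such ``deficient'' subsets of $A$ is nonempty. I would then let $A'$ be a deficient set of \emph{maximum} cardinality; by construction $A'\ne\emptyset$ and $|N_G(A')|\le \ell|A'|-1$, which is the second of the two conditions to be verified.

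It remains to prove $|A\setminus A'|\le \nu_\ell(G;A)$. For this, I would consider the bipartite subgraph $H=G\bigl[(A\setminus A')\cup (B\setminus N_G(A'))\bigr]$ and show that Hall's condition holds on its left side, i.e., for every $\emptyset\ne A''\subset A\setminus A'$,
\[
|N_H(A'')|=|N_G(A'')\setminus N_G(A')|\ge \ell|A''|.
\]
If this failed for some $A''$, then using $A'\cap A''=\emptyset$ one gets
\[
|N_G(A'\cup A'')|=|N_G(A')|+|N_G(A'')\setminus N_G(A')|\le (\ell|A'|-1)+(\ell|A''|-1)\le \ell|A'\cup A''|-1,
\]
so $A'\cup A''$ would be a strictly larger deficient set, contradicting the maximality of $A'$.

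With Hall's condition verified on $H$, the backward (``if'') direction of Proposition \ref{prop:hall} applied to $H$ produces an $\ell$-star forest centered at $A\setminus A'$ with $|A\setminus A'|$ connected components, contained in $H$ and hence in $G$. Since such a forest is centered at a subset of $A$, it witnesses $\nu_\ell(G;A)\ge |A\setminus A'|$, completing the argument. The only real subtlety is the maximality trick in the second step; the additivity of the inequalities $|N_G(\cdot)|\le \ell|\cdot|-1$ across disjoint unions (gaining a slack of $-2$ versus the required $-1$) is what makes the Hall verification on $H$ work cleanly.
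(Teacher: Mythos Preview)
Your proof is correct and follows essentially the same approach as the paper: take a maximum ``deficient'' set $A'$, and show via Hall's condition that $A\setminus A'$ admits an $\ell$-star forest saturating it. The only cosmetic difference is that the paper works in $G'=G[V\setminus A']$ (keeping all of $B$) and argues by contradiction, whereas you pass to $H=G[(A\setminus A')\cup(B\setminus N_G(A'))]$ and verify Hall's condition directly; both lead to the same contradiction with the maximality of $A'$.
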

\begin{proof}
    Let
    \[
    \mathcal{B}=\{\emptyset\}\cup \left\{A''\subset A:\, |N_G(A'')|<\ell |A''|\right\}.
    \]
    Let $A'$ be a set of maximum size in $\mathcal{B}$. Assume for contradiction that $|A\setminus A'|>\nu_{\ell}(G;A)$. 
    Let $G'=G[V\setminus A']$. Since $\nu_{\ell}(G';A\setminus A')\le \nu_{\ell}(G;A)<|A\setminus A'|$, by Proposition \ref{prop:hall} there exists $A''\subset A\setminus A'$ such that $|N_G(A'')|= |N_{G'}(A'')| < \ell |A''|$. Let $\tilde{A}=A'\cup A''$. Clearly, $A''\ne\emptyset$, and so $|\tilde{A}|>|A'|$. 
    But $|N_G(\tilde{A})|\le |N_G(A')|+|N_G(A'')|< \ell |A'|+\ell|A''|= \ell|\tilde{A}|$, in contradiction to the maximality of $A'$ in $\mathcal{B}$. Hence, $|A\setminus A'|\le \nu_{\ell}(G;A)$. Since $|A|>\nu_{\ell}(G;A)$, we must have $A'\ne \emptyset$. Therefore, since $A'\in\mathcal{B}$, $|N_G(A')|\le \ell|A'|-1$, as wanted.
\end{proof}

In addition, we will need the following characterization of the matching number of a graph. Let $G=(V,E)$ be a graph. Let $v_1,\ldots,v_r\in V$, and let $S_1,\ldots,S_t\subset V$ be pairwise disjoint sets of odd size. We say that the family $C=\{v_1,\ldots,v_r,S_1,\ldots,S_t\}$ is an \emph{odd set cover} of $G$ if every edge of $G$ is either incident to one of the vertices $v_1,\ldots,v_r$, or is contained in one of the odd sets $S_1,\ldots,S_t$. We define the \emph{weight} of the odd set cover $C$ as
\[
    w(C)= r+\sum_{i=1}^t \frac{|S_i|-1}{2}.
\]

Edmonds proved in \cite{edmonds1965paths} the following result,  which can be seen as an analogue of K\"onig's theorem for general graphs. 

\begin{theorem}[Edmonds {\cite[Section 5.6]{edmonds1965paths}}; see also {\cite[Exercise 3.1.17]{lovasz1986book}}]\label{thm:edmonds}
Let $G=(V,E)$ be a graph. Then,
\[
    \nu(G)=\min\{ w(C) :\, C \text{ is an odd set cover of } G \}.
\]
\end{theorem}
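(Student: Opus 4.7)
The plan is to prove both inequalities in the claimed identity.

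For the easy direction, $\nu(G)\le \min_C w(C)$, I would argue by a counting assignment: given a matching $M$ and an odd set cover $C=\{v_1,\ldots,v_r,S_1,\ldots,S_t\}$, assign each edge of $M$ either to one of its endpoints in $\{v_1,\ldots,v_r\}$ (if such an endpoint exists) or to the odd set $S_j$ that contains it. Since $M$ is a matching, at most $r$ edges can be assigned to the listed vertices, and since $|S_j|$ is odd, at most $(|S_j|-1)/2$ edges of $M$ can be contained in $S_j$. Summing yields $|M|\le r+\sum_{j=1}^t (|S_j|-1)/2 = w(C)$.

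For the reverse inequality, the substantive task is to construct an odd set cover of weight exactly $\nu(G)$. My approach is to invoke the Gallai--Edmonds structure theorem: let $D\subseteq V$ be the set of vertices of $G$ missed by at least one maximum matching, let $A=N_G(D)\setminus D$, and let $C=V\setminus(A\cup D)$. Gallai--Edmonds gives that each of the $t$ connected components of $G[D]$ is factor-critical (hence odd-sized), that there are no edges between $D$ and $C$, that $G[C]$ admits a perfect matching, and that
\[
    \nu(G)=\tfrac{1}{2}\bigl(|V|+|A|-t\bigr).
\]

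From this data I would assemble an odd set cover as follows. List all vertices of $A$ as the $v_i$'s and take each connected component of $G[D]$ as an odd set $S_j$; this covers every edge incident to $A$ and every edge internal to a component of $G[D]$. Since no edges run between $D$ and $C$ or between different components of $G[D]$, the only uncovered edges lie inside $G[C]$. For each connected component $C_k'$ of $G[C]$, which has even size $2m_k$, pick any vertex $u_k\in C_k'$, add $u_k$ to the list of $v_i$'s, and add the odd set $C_k'\setminus\{u_k\}$; the contribution to the weight is $1+(2m_k-2)/2 = m_k$, and every edge of $C_k'$ is either incident to $u_k$ or contained in $C_k'\setminus\{u_k\}$. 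The total weight is
\[
    |A|+\frac{|D|-t}{2}+\frac{|C|}{2}=\frac{|A|+|V|-t}{2}=\nu(G),
\]
matching the lower bound from the easy direction. The main obstacle is the appeal to Gallai--Edmonds; a self-contained derivation would proceed by induction on $|V|$ via alternating-path arguments, showing simultaneously that components of $G[D]$ are factor-critical and that $|A|\le t$.
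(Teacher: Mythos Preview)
The paper does not prove this theorem; it is quoted as a classical result of Edmonds (with references to Edmonds' 1965 paper and the Lov\'asz--Plummer book), followed only by a remark explaining why one may assume the odd sets $S_1,\ldots,S_t$ in the cover to be pairwise disjoint. There is thus no proof in the paper to compare your attempt against.

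Your argument is correct. The inequality $\nu(G)\le w(C)$ is the standard counting, and your construction of an odd set cover of weight exactly $\nu(G)$ from the Gallai--Edmonds decomposition is a well-known and clean route: the components of $G[D]$ supply odd sets of total weight $(|D|-t)/2$, the set $A$ supplies $|A|$ cover vertices, and each even component of $G[C]$ contributes exactly half its size, so the total weight is $(|V|+|A|-t)/2=\nu(G)$. One cosmetic point: when a component $C_k'$ of $G[C]$ has size $2$, the set $C_k'\setminus\{u_k\}$ is a singleton, which is a legitimate (if vacuous) odd set contributing weight $0$; you may simply omit it from the cover without affecting the weight count.
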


\begin{remark}
    Note that, although the definition of an odd set cover as stated in \cite{edmonds1965paths} and \cite{lovasz1986book} does not require the sets $S_1,\ldots,S_t$ in the cover to be pairwise disjoint, we may add that restriction without loss of generality. Indeed, let $C=\{v_1,\ldots,v_r,S_1,\ldots,S_t\}$ be an odd set cover of a graph $G$, and assume that two sets in the cover, say $S_1$ and $S_2$, have a non-empty intersection. Then, we may define a new odd set cover 
    \[
    C'=\begin{cases}
        (C \setminus \{S_1,S_2\})\cup \{ S_1\cup S_2\} & \text{if } |S_1\cup S_2| \text{ is odd,}\\
        (C\setminus \{S_1,S_2\})\cup\{S_1\cup S_2\setminus \{v\},v\} & \text{if } |S_1\cup S_2| \text{ is even,}
    \end{cases}
    \]
    where $v$ may be any vertex in $S_1\cup S_2$. It is easy to check that in both cases $C'$ is indeed an odd set cover of $G$, and that $w(C')\le w(C)$. By repeatedly performing this operation, we obtain an odd set cover with pairwise disjoint odd sets, whose weight is at most $w(C)$. 
\end{remark}

\section{The structure of graphs with low partition density}
\label{sec:structure}

In this section, we prove our main result, Theorem \ref{thm:weak_brouwer_improved}. First, we present the proof of Theorem \ref{thm:sa}, which gives an upper bound on the star arboricity of graphs with low partition density. 
We will need the following result about the structure of graphs $G$ with $\parden(G)<k$.

For a graph $G=(V,E)$ and $A,B\subset V$ such that $A\cap B=\emptyset$, we denote by $G[A,B]$ the subgraph of $G$ with vertex set $A\cup B$ and edge set $\{\{u,v\}\in E :\, u\in A,\, v\in B\}$. Note that $G[A,B]$ is a bipartite graph, with partite sets $A$ and $B$. We say that a set $U\subset V$ is an \emph{independent set} in $G$ if $G[I]$ has no edges.

\begin{proposition}\label{prop:structure} Let $k\ge 1$, and let $G=(V,E)$ be a graph with $\parden(G)<k$. Then, there exist pairwise disjoint sets $U,C,I\subset V$ such that $V=U\cup C \cup I$, $|U|\le 4k^2+2k-3$, $|C|\le k$, and $I$ is an independent set in $G$ with $N_G(I)\subset C$.
\end{proposition}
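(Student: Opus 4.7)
The plan is to combine Edmonds' odd set cover theorem (Theorem \ref{thm:edmonds}) with the Hall-type Corollary \ref{cor:hall}. First, Lemma \ref{lemma:parden_matching} applied with $\ell = 1$ gives $\nu(G) \le \lfloor 2\parden(G) \rfloor \le 2k-1$. By Theorem \ref{thm:edmonds} and the remark following it, there is an odd set cover $\{v_1,\ldots,v_r,S_1,\ldots,S_t\}$ of $G$ of weight at most $2k-1$, with the $S_i$ pairwise disjoint and, after discarding harmless singletons, of size at least $3$. Let $R=\{v_1,\ldots,v_r\}$, $\bar{S}=\bigcup_i S_i$, and $I_0 = V \setminus (R \cup \bar{S})$. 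The defining property of an odd set cover immediately gives that $I_0$ is independent in $G$ with $N_G(I_0) \subset R$. Since $|S_i| \ge 3$ forces $t \le \sum_i (|S_i|-1)/2$, we obtain
\[
|\bar{S}| = 2 \sum_i \frac{|S_i|-1}{2} + t \le 3\sum_i \frac{|S_i|-1}{2} \le 3(2k-1-r).
\]

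If $r \le k$, take $C = R$, $I = I_0$, and $U = \bar{S}$; then $|C| \le k$ and $|U| \le 6k-3 \le 4k^2+2k-3$ for $k \ge 1$. Otherwise $r > k$, and the remaining work is to shrink $R$ to a set $C$ of size $\le k$ while absorbing only a controlled number of vertices from $I_0$ into $U$. Consider the bipartite subgraph $B = G[R, I_0]$. Lemma \ref{lemma:parden_matching} with $\ell = 2k+1$ gives $\nu_{2k+1}(G) \le \lfloor (1+1/(2k+1))\parden(G) \rfloor \le k$, hence $\nu_{2k+1}(B; R) \le \nu_{2k+1}(G) \le k < r$. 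Corollary \ref{cor:hall} then produces a nonempty $R' \subset R$ with $|R \setminus R'| \le k$ and $|N_B(R')| \le (2k+1)|R'|-1$.

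Now set $C = R \setminus R'$, $I = I_0 \setminus N_B(R')$, and $U = \bar{S} \cup R' \cup N_B(R')$. These three sets are pairwise disjoint and cover $V$. The set $I$ is independent (being a subset of $I_0$), and by construction no vertex of $I$ has a neighbor in $R'$; since $N_G(I_0) \subset R$, this yields $N_G(I) \subset R \setminus R' = C$. Finally, using $|R'| \le r \le 2k-1$,
\[
|U| \le 3(2k-1-r) + |R'| + (2k+1)|R'| - 1 = 6k - 4 - 3r + (2k+2)|R'| \le 6k - 4 + (2k-1)r \le 4k^2+2k-3.
\]

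The main conceptual obstacle is recognizing that Edmonds' odd set cover, rather than a plain maximum matching, is the right starting structure. A naive max-matching approach only yields a vertex cover of size $\le 4k-2$, and shrinking it to $|C| \le k$ via Corollary \ref{cor:hall} gives a bound on $|U|$ of order $8k^2$, worse than the target $4k^2+2k-3$. Using the full Edmonds cover, the odd-set part $\bar{S}$ (of total size $\le 6k-3$) can be absorbed directly into $U$, leaving only the smaller vertex-cover-like part $R$ of size $\le 2k-1$ to be further split by Hall, which works out to give exactly the bound $4k^2+2k-3$.
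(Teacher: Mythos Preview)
Your argument is correct (with one small caveat below), but it takes a more elaborate route than the paper. The paper does precisely what you call the ``naive max-matching approach'': it takes a vertex cover $S$ with $|S|\le 2\nu(G)\le 4k-2$, and when $|S|>k$ applies Corollary~\ref{cor:hall} to the bipartite graph $G[S,V\setminus S]$ with $\ell=k$. The key observation you missed is that $\ell=k$ already suffices: Lemma~\ref{lemma:parden_matching} gives $\nu_k(G)\le\lfloor(1+1/k)\parden(G)\rfloor\le k$, so Corollary~\ref{cor:hall} yields $S'\subset S$ with $|S\setminus S'|\le k$ and $|N(S')|\le k|S'|-1\le k(4k-2)-1$, whence $|U|\le |S'|+|N(S')|\le (4k-2)+(4k^2-2k-1)=4k^2+2k-3$. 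So your final paragraph is mistaken: the vertex-cover approach does hit the target exactly, and the detour through Edmonds' theorem and the choice $\ell=2k+1$ are unnecessary. That said, your route is a valid alternative, and it is pleasant that the arithmetic with the odd-set part $\bar S$ balances out to give the same constant.

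One genuine (if easily repaired) gap: nothing guarantees $R\cap\bar S=\emptyset$, so in the case $r>k$ your sets $C=R\setminus R'$ and $U=\bar S\cup R'\cup N_B(R')$ need not be disjoint (a vertex in $(R\setminus R')\cap\bar S$ would lie in both). The fix is trivial---replace $\bar S$ by $\bar S\setminus R$ throughout, or simply set $U=V\setminus(C\cup I)$; neither change affects your size bounds---but it should be stated.
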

\begin{proof}
    By Lemma \ref{lemma:parden_matching}, we have $\nu(G)\le 2k-1$ and $\nu_k(G)\le k$.
        Let $S\subset V$ be a vertex cover of $G$ of size $|S|=\tau(G)\le 2\nu(G)\le 4k-2$. Since $S$ is a vertex cover, $V\setminus S$ is an independent set in $G$.

    If $|S|\le k$, we may choose $C=S$, $U=\emptyset$ and $I=V\setminus S$, and we are done.    
    So, we assume $|S|>k$. Look at the bipartite graph $G'=G[S,V\setminus S]$. Since $\nu_k(G';S)\le \nu_k(G)\le k<|S|$, by Corollary \ref{cor:hall} there is a set $\emptyset\ne S'\subset S$ such that $|S\setminus S'|\le k$, and 
    \[
    |N_{G'}(S')|\le k|S'|-1\le k(4k-2)-1 = 4k^2-2k-1.
    \]
    Let $C=S\setminus S'$, $U=S'\cup N_{G'}(S')$ and $I=V\setminus (C\cup U)$.
    Then, we have $|C|\le k$ and $|U|\le |S'|+|N_{G'}(S')|\le 4k-2 + (4k^2-2k-1)= 4k^2+2k-3$. Since $I\subset V\setminus S$,  $I$ is an independent set in $G$. 
    We are left to show that $N_G(I)\subset C$. Indeed, let $u\in I$ and $v\in V\setminus I$ such that $\{u,v\}\in E$. Since $S$ is a vertex cover, $v\in S$. If $v\in S'$, then $u\in N_{G'}(S')$, a contradiction to $I\cap U=\emptyset$. Therefore, $v\in S\setminus S'=C$, as wanted.
\end{proof}

For a graph $G$, let $\Delta(G)$ be the maximum degree of a vertex in $G$.
Alon, McDiarmid and Reed proved in \cite{alon1992star} that for every graph $G$, $\sa(G)\le k+15\log{k}+6\log{(\Delta(G))}/\log{k}+65$. Their argument relies on the following two key results.

We say that a family of (not necessarily distinct) sets $\{A_1,\ldots,A_m\}$ has a \emph{transversal} (or system of distinct representatives) if there exist $m$ distinct elements $x_1,\ldots,x_m$ such that $x_i\in A_i$ for all $1\le i\le m$.

Let $k,c\ge 1$ be integers, and let $D=(V,A)$ be a directed graph.  Let $\{L_v:\, v\in V\}$ be a family of subsets of $[k+c]=\{1,2,\ldots,k+c\}$ such that, for all $v\in V$, the set $L_v$ is of size at most $c$, and the family 
\[
    \mathcal{L}_v=\{L_u:\, \vv{uv}\in A\}
\]
has a transversal.
We call the family $\{L_v:\, v\in V\}$ a  \emph{$(k,c)$-assignment} of $D$.

\begin{lemma}[Alon, McDiarmid, Reed {\cite[Lemma 2.3]{alon1992star}}]\label{lem:alon_coloring}
    Let $k,c\ge 1$. Let $G=(V,E)$ be a graph, and let $D=(V,A)$ be a $k$-orientation of $G$. If $D$ has a $(k,c)$-assignment, then $\sa(G)\le k+3c+2$.
\end{lemma}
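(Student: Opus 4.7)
The plan is to decompose $E$ into $k+c$ star forests coming from a transversal-based edge coloring, together with at most $2c+2$ additional star forests that absorb a small ``defect'' subgraph. For each $v\in V$, the hypothesis that $\{L_v:\, v\in V\}$ is a $(k,c)$-assignment of $D$ yields a transversal of $\mathcal{L}_v$, which I read as an injection $g_v:\{u:\vv{uv}\in A\}\to[k+c]$ satisfying $g_v(u)\in L_u$ for every in-neighbor $u$ of $v$. I then color each arc $\vv{uv}\in A$ with the color $g_v(u)\in[k+c]$; by injectivity of $g_v$, the arcs entering any fixed vertex receive pairwise distinct colors.

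Call an arc $\vv{uv}$ \emph{bad} if $g_v(u)\in L_v$, and let $B\subseteq A$ denote the set of bad arcs. The colors of the bad arcs entering any $v$ are distinct elements of $L_v$, so at most $|L_v|\le c$ bad arcs enter $v$. Viewed as an undirected graph, $B$ thus inherits from $D$ a $c$-orientation, so by Theorem \ref{thm:frank} we obtain $\den(B)\le c$, and combining \eqref{eq:rho_arb} with \eqref{eq:sa_at_most_twice_a} yields $\sa(B)\le 2\lceil c+1/2\rceil=2c+2$.

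For each color $i\in[k+c]$, let $E_i$ consist of the non-bad arcs colored $i$, and let $P_i=\{w\in V:i\in L_w\}$. Every arc $\vv{uv}\in E_i$ satisfies $i=g_v(u)\in L_u$, so $u\in P_i$, and the non-bad condition $i\notin L_v$ gives $v\notin P_i$. Hence no $E_i$-arc enters $P_i$, while each vertex in $V\setminus P_i$ has at most one $E_i$-arc entering it and no $E_i$-arc leaving it, so every connected component of $E_i$ is a star centered in $P_i$; that is, $E_i$ is a star forest. Combining the $k+c$ star forests $E_1,\ldots,E_{k+c}$ with at most $2c+2$ star forests decomposing $B$ yields an edge-disjoint decomposition of $G$ into at most $k+3c+2$ star forests. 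The main conceptual hurdle is isolating the set $B$ of bad arcs and observing the $c$-bound on their in-degree at each vertex; after that, the star-forest structure of the non-bad color classes is forced, and the bound on $\sa(B)$ is a short application of the standard orientability/arboricity/star-arboricity hierarchy.
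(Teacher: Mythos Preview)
The paper does not supply its own proof of this lemma; it merely cites \cite[Lemma 2.3]{alon1992star}. Your argument is correct and is in fact essentially the original Alon--McDiarmid--Reed proof: color each arc $\vv{uv}$ by the transversal value $g_v(u)\in L_u$, peel off the ``bad'' arcs (those whose color lies in $L_v$), observe that each color class of good arcs is a star forest bipartite between $P_i$ and $V\setminus P_i$ with leaves of degree at most one, and bound the star arboricity of the bad subgraph via its $c$-orientation. The only small remark is that your equality $2\lceil c+1/2\rceil=2c+2$ uses that $c$ is an integer, which is indeed part of the definition of a $(k,c)$-assignment in the paper.
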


\begin{proposition}[Alon, McDiarmid, Reed \cite{alon1992star}\footnote{Proposition \ref{prop:alon_coloring_2} is implicit in the proof of \cite[Theorem 2.1]{alon1992star} (see page 378 of \cite{alon1992star}).}]\label{prop:alon_coloring_2}
   Let $k\ge 2$. Let $G=(V,E)$ be a graph, and let $D=(V,A)$ be a $k$-orientation of $G$. 
    Let $c$ be an integer satisfying $k\ge c\ge \max\{ 5\log{k}+20, 2\log{(\Delta(G))}/\log{k}+11\}$. Then,  $D$ has a $(k,c)$-assignment.
\end{proposition}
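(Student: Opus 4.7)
The plan is to apply the symmetric Lovász Local Lemma to a random assignment. For each $v\in V$, independently and uniformly at random choose $L_v$ to be a $c$-element subset of $[k+c]$. For each $v$, let $B_v$ be the event that $\mathcal{L}_v=\{L_u:\vv{uv}\in A\}$ has no transversal; I aim to show $\Pr[\bigcap_v \overline{B_v}]>0$, which yields a $(k,c)$-assignment of $D$.

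By Hall's theorem, $B_v$ occurs iff some $J\subseteq N^-(v):=\{u:\vv{uv}\in A\}$ satisfies $|\bigcup_{u\in J}L_u|<|J|$. Since each $L_u$ has size $c$, this is impossible when $|J|\le c$, so a violation requires $|J|=j\ge c+1$ together with some $T\subseteq[k+c]$ of size $j-1$ containing every $L_u$ for $u\in J$. Using independence of the $L_u$ and the bound $|N^-(v)|\le k$, a union bound gives
\[
\Pr[B_v] \le \sum_{j=c+1}^{k}\binom{k}{j}\binom{k+c}{j-1}\left(\frac{\binom{j-1}{c}}{\binom{k+c}{c}}\right)^{j}.
\]

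Each $B_v$ depends only on $\{L_u\}_{u\in N^-(v)}$, so $B_v$ and $B_w$ are mutually independent whenever $N^-(v)\cap N^-(w)=\emptyset$. For each $u\in N^-(v)$, the number of $w$ with $u\in N^-(w)$ equals the out-degree of $u$ in $D$, which is at most $\Delta(G)$. Hence each $B_v$ is mutually independent of all but at most $k\Delta(G)$ other $B_w$. The symmetric Lovász Local Lemma then reduces the problem to showing $\Pr[B_v]\le 1/(e(k\Delta(G)+1))$ for every $v$.

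The key computational task is bounding the above sum. Using the standard estimates $\binom{n}{r}\le (en/r)^r$ and the elementary inequality $\binom{j-1}{c}/\binom{k+c}{c}\le((j-1)/(k+c))^c$ (which follows factor-by-factor from $j-1\le k+c$), each term simplifies, up to a constant, to $(e^2k/j)^j\cdot((j-1)/(k+c))^{cj-j+1}$, which decays like $k^{-\Omega(c)}$ throughout the summation range. The hypothesis $c\ge 5\log k+20$ is used to absorb the polynomial prefactors in $k$ and ensure the full sum is $k^{-\Omega(c)}$, while $c\ge 2\log(\Delta(G))/\log k+11$ provides the additional factor needed to beat $k\Delta(G)$ in the LLL condition (i.e.\ a factor of order $\Delta(G)^{-1}$ comes from $k^{-c/2}\le\Delta(G)^{-1}$ once $c\log k\ge 2\log\Delta(G)$ with slack). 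I expect the main obstacle to be carrying out this uniform estimate over all $j\in\{c+1,\ldots,k\}$, since the summands are non-monotonic and both the small-$j$ and large-$j$ endpoints require separate treatment.
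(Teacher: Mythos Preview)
The paper does not prove this proposition; it is cited from \cite{alon1992star}. Your strategy---choose each $L_v$ uniformly at random among the $c$-subsets of $[k+c]$, bound $\Pr[B_v]$ via Hall's theorem by a union bound over blocking sets $J$ of size $j>c$ and target sets $T$ of size $j-1$, then apply the symmetric Lov\'asz Local Lemma with dependency degree at most $k\Delta(G)$---is exactly the method of \cite{alon1992star}, and your setup (including the dependency count) is correct.

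The gap is in the estimation you sketch. The bound $\binom{k}{j}\le(ek/j)^j$ is loose by a factor of order $e^{j}$ when $j$ is near $k$; at $j=k$ it replaces $\binom{k}{k}=1$ by $e^k$. With it, your displayed expression $(e^2k/j)^j\cdot((j-1)/(k+c))^{cj-j+1}$ becomes, at $j=k$, of order $e^{2k}\cdot e^{-(c^2-1)/2}$, which for $c=\Theta(\log k)$ is exponentially \emph{large} in $k$. So the claim that each summand is $k^{-\Omega(c)}$ ``throughout the summation range'' is false, and the hypotheses $c\ge 5\log k+20$ and $c\ge 2\log(\Delta(G))/\log k+11$ cannot salvage this particular estimate. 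The actual term $\binom{k}{k}\binom{k+c}{k-1}p_k^{\,k}=\binom{k+c}{c+1}p_k^{\,k}$ \emph{is} small (at most $(2ek/c)^{c+1}e^{-c(c+1)/2}$), so the union bound itself is fine; it is only your upper estimate that fails. The fix is to split the range: for $j\le k/2$ your bounds suffice (one extracts a factor $((j-1)/k)^{(c-2)j}\le 2^{-(c-2)(c+1)}$), while for $j>k/2$ one should write $\binom{k}{j}=\binom{k}{k-j}$ and $\binom{k+c}{j-1}=\binom{k+c}{c+1+(k-j)}$ and bound these in terms of $k-j$ rather than $j$. With that adjustment the LLL condition follows from the stated hypotheses on $c$.
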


\begin{proof}[Proof of Theorem \ref{thm:sa}]
    Let $k\ge 1$, and let $G=(V,E)$ be a graph with $\parden(G)<k$.  
    If $k\le 100$, then, since $\den(G)\le \parden(G)<k$, we obtain, by \eqref{eq:sa_at_most_twice_a} and \eqref{eq:rho_arb},
    \[
        \sa(G)\le 2\cdot \arbor(G)\le 2\lceil\den(G)+1/2\rceil\le 2(k+1) \le k+15\log{k}+65,
    \]
    as wanted.  
    Therefore, we assume $k> 100$. Let $c=\lceil5\log{k}+20\rceil$. Note that, since $k> 100$, we have $c\le k$. 
    By Proposition \ref{prop:structure}, there exist pairwise disjoint sets $U, C, I\subset V$ such that $V=U\cup C\cup I$,  $|U|\le 4k^2+2k-3$, $|C|\le k$, and $I$ is an independent set in $G$ with $N_G(I)\subset C$. 

    Since $\den(G)\le \parden(G)<k$, by Theorem \ref{thm:frank} $G$ is $k$-orientable. Let $D=(V,A)$ be a $k$-orientation of $G$. Since $\deg(v)\le k$ for all $v\in I$, we may assume that all edges incident to a vertex $v\in I$ are oriented in $D$ towards $v$.

    Let $G'=(V',E')$ be the graph obtained from $G[U\cup C]$ by adding a new vertex $x$ and adding all the edges of the form $\{x,v\}$ for $v\in C$. Note that $G'$ is $k$-orientable. Indeed, we may first orient all the edges of $G[U\cup C]$ according to their orientation in $D$, and then, for all $v\in C$, orient the edge $\{v,x\}$ as $\vv{vx}$. Since the degree of $x$ in $G'$ is at most $k$, we obtain a $k$-orientation of $G'$. We denote this orientation by $D'=(V',A')$.
    
    We have $\Delta(G')\le |V'|-1=|U|+|C|\le 5k^2$, and therefore
    \[
    \frac{2\log{(\Delta(G'))}}{\log{k}}+11 \le \frac{4\log{k}+2\log{5}}{\log{k}}+11< 20 \le c.
    \]
    Hence, by Proposition \ref{prop:alon_coloring_2}, $D'$ has a $(k,c)$-assignment $\{L'_v:\, v\in V'\}$. So, $|L_v'|\le c$, and the family $\mathcal{L}'_v=\{L_u':\, \vv{uv}\in A'\}$ has a transversal, for all $v\in V'$. 
    We may extend this assignment to a $(k,c)$-assignment of $D$, in the following way. For $v\in V$, let $L_v=L'_v$ if $v\in U\cup C$, and  $L_v=\emptyset$ if $v\in I$. 
     Clearly, $|L_v|\le c$ for all $v\in V$.   We are left to show that for every $v\in V$, the family
    $
        \mathcal{L}_v= \{L_u:\, \vv{uv}\in A\}
    $     has a transversal. Indeed, let $v\in V$. Then, since all the edges in $D$ incident to a vertex in $I$ are oriented towards that vertex, we have $\mathcal{L}_v=\mathcal{L}'_v$ if $v\in U\cup C$, and, since $N_G(I)\subset C$, $\mathcal{L}_v\subset \mathcal{L}'_x$ if $v\in I$. Therefore, in both cases, $\mathcal{L}_v$ has a transversal.
    Hence, $\{L_v:\, v\in V\}$ is a $(k,c)$-assignment of $D$.
    
    By Lemma \ref{lem:alon_coloring}, we obtain
    \[
        \sa(G)\le k+3c+2 \le k+15\log{k}+65.
    \]
\end{proof}

The next result shows that the bound in Theorem \ref{thm:sa} is optimal, up to lower order terms.

\begin{proposition}
 Let $k\ge 1$, and let $n\ge k^2-2k+2$.  Let $K_{k,n}$ be the complete bipartite graph with parts of size $k$ and $n$ respectively. Then, $\sa(K_{k,n})=k$ and
 $
 \parden(K_{k,n})<k$.
\end{proposition}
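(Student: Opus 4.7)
The plan splits into two independent arguments.

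For $\sa(K_{k,n}) = k$, the upper bound comes from decomposing $E(K_{k,n})$ into the $k$ stars centered at each vertex of the size-$k$ side; each such star is, on its own, a star forest. For the matching lower bound, I would use the elementary observation that every star forest on $N$ vertices has at most $N-1$ edges (each star-component $S_m$ contributes $m-1$ edges on $m$ vertices, so the total is $N$ minus the number of star-components). Hence if $K_{k,n}$ decomposes into $t$ edge-disjoint star forests on a common vertex set of size $k+n$, then
\[
kn = |E(K_{k,n})| \le t(k+n-1),
\]
so $t \ge kn/(k+n-1)$. A short calculation shows $kn/(k+n-1) > k-1$ if and only if $n > (k-1)^2$, which is exactly the hypothesis $n \ge k^2 - 2k + 2$. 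Therefore $t \ge k$, completing the equality.

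For $\parden(K_{k,n}) < k$, let $H$ be any subgraph of $K_{k,n}$ with at least one vertex, and let $A$, $B$ denote the two sides of $K_{k,n}$ with $|A| = k$. Index the connected components of $H$ containing at least one edge by $i = 1, \dots, t$, and let $a_i$ (respectively $b_i$) be the number of vertices of $A$ (respectively $B$) in component $i$. Then $a_i, b_i \ge 1$ and $\sum_i a_i \le k$. Setting $M = n'(H)$, each such component satisfies $a_i + b_i \le M$ and contributes at most $a_i b_i$ edges, so
\[
|E(H)| \le \sum_i a_i b_i \le \sum_i a_i(M-a_i) = M\sum_i a_i - \sum_i a_i^2 \le (M-1)\sum_i a_i,
\]
where the last inequality uses $a_i^2 \ge a_i$, valid since $a_i \ge 1$. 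Dividing by $M > 0$ gives $|E(H)|/n'(H) \le (1 - 1/M)\sum_i a_i < k$. The edge-free case contributes $0$ to the maximum, and taking the maximum over $H$ yields $\parden(K_{k,n}) < k$.

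No genuine obstacle arises in either half. The main points requiring care are the strictness of the partition density bound, which is supplied by the $1/M$ loss combined with $\sum_i a_i \le k$, and the precise threshold $n \ge k^2 - 2k + 2$, which is exactly where the edge-counting lower bound forces $t \ge k$ rather than merely $t \ge k-1$.
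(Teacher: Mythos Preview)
Your proposal is correct and follows essentially the same approach as the paper. For $\sa(K_{k,n})=k$, both you and the paper use the $k$ stars centered on the small side for the upper bound and the forest edge-count $kn\le t(k+n-1)$ for the lower bound (the paper phrases this via arboricity and Nash--Williams, but it is the same inequality and the same threshold computation). For $\parden(K_{k,n})<k$, the paper works with the partition formulation and bounds $\sum k_i n_i/\max(k_i+n_i)\le\sum k_i n_i/(k_i+n_i)<\sum k_i\le k$, while you work with an arbitrary subgraph $H$ and bound $\sum a_i b_i\le (M-1)\sum a_i$ with $M=n'(H)$; these are the same inequality in slightly different clothing.
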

\begin{proof}
   By Lemma \ref{lemma:sa_at_most_tau}, $\sa(K_{k,n})\le \tau(K_{k,n})=k$. On the other hand, by \eqref{eq:nash_williams},
    \[
    \sa(G)\ge \arbor(G)\ge \ceilfrac{kn}{k+n-1} =k,
    \]
    (since $k-kn/(k+n-1)<1$ for $n\ge k^2-2k+2$). Hence, $\sa(K_{k,n})=k$. 
    
    Since every induced subgraph of $K_{k,n}$ is either empty or a complete bipartite graph, we have, by \eqref{eq:parden2},
    \[
    \parden(K_{k,n})=\max \left(\frac{\sum_{i=1}^m k_i n_i}{\max\{k_i+n_i:\, 1\le i\le m\}}\right),
    \]
    where the maximum is taken over all $m\ge 1$, $k_1,\ldots,k_m$ such that $k_1+\cdots+k_m\le k$, and $n_1,\ldots,n_m\ge 1$ such that $n_1+\cdots+n_m\le n$. Note that for every such $k_1,\ldots,k_m$ and $n_1,\ldots,n_m$, we have
    \[
       \frac{\sum_{i=1}^m k_i n_i}{\max\{k_i+n_i:\, 1\le i\le m\}}\le \sum_{i=1}^m \frac{k_i n_i}{k_i+n_i} <\sum_{i=1}^m k_i \le k. 
    \]
    Hence, $\parden(K_{k,n})<k$.
\end{proof}

We proceed to prove Theorem \ref{thm:weak_brouwer_improved}.

\begin{proof}[Proof of Theorem \ref{thm:weak_brouwer_improved}]
    By Lemma \ref{lemma:sparse_subgraph}, there is a subgraph $G'$ of $G$ such that $\eps_k(G')\ge \eps_k(G)$ and $\parden(G')<k$. By Theorem \ref{thm:sa}, $\sa(G')\le k+15 \log{k}+65$. Hence, by Lemma \ref{lemma:star_arboricity_bound},
    \[
        \eps_k(G)\le \eps_k(G')\le k\cdot \sa(G')\le k^2+15 k \log{k}+65 k.
    \]
\end{proof}

\section{Sums of Laplacian eigenvalues and the matching number}\label{sec:matchings}

In this section, we present the proofs of Theorem \ref{thm:matching_number} and Corollary \ref{cor:weak_brouwer}. 
We will need the following result, which gives an upper bound on $\eps_k(G)$ in terms of the number of vertices in the largest connected component of $G$.

\begin{proposition}\label{prop:small_n_components}
    Let $G=(V,E)$ be a graph. Then, for all $1\leq k\leq |V|$,
    \[
        \eps_k(G)\le \lfrac{k\cdot n'(G)}{2}.
    \]
\end{proposition}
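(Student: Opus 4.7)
My plan is to reduce to the connected case, combine the two most basic Laplacian bounds, and track parity to pick up the floor.

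For a connected graph $H$ on $n_H$ vertices with $m_H$ edges, Lemma \ref{lemma:n_bound} gives $\lambda_1(L(H)) \le n_H$, hence $\sum_{j=1}^{k} \lambda_j(L(H)) \le k\,n_H$. Combined with the trace identity $\sum_{j=1}^{n_H} \lambda_j(L(H)) = 2 m_H$ and non-negativity of Laplacian eigenvalues (giving $\sum_{j=1}^k \lambda_j(L(H)) \le 2 m_H$), this yields
\[
\sum_{j=1}^{k} \lambda_j(L(H)) \le \min(k n_H,\, 2 m_H) = m_H + \frac{k n_H}{2} - \frac{|k n_H - 2 m_H|}{2}.
\]
When $k n_H$ is even, the right-hand side equals $m_H + \lfloor k n_H/2 \rfloor$. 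When $k n_H$ is odd, $k n_H - 2 m_H$ is odd, so $|k n_H - 2 m_H| \ge 1$ and the right-hand side is at most $m_H + (k n_H - 1)/2 = m_H + \lfloor k n_H / 2 \rfloor$. Either way, $\eps_k(H) \le \lfloor k\, n'(H)/2 \rfloor$, since $n'(H) = n_H$.

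For a general graph $G$, let $G_1, \ldots, G_t$ be the connected components, with $G_l$ having $n_l$ vertices and $m_l$ edges. By Lemma \ref{lemma:components}, the multiset of Laplacian eigenvalues of $G$ is the disjoint union of those of the $L(G_l)$, so there exist integers $0 \le k_l \le n_l$ with $\sum_{l=1}^t k_l = k$ and
\[
\sum_{j=1}^{k} \lambda_j(L(G)) = \sum_{l=1}^t \sum_{j=1}^{k_l} \lambda_j(L(G_l)).
\]
(The top $k_l$ eigenvalues of $L(G_l)$ must be selected from each component, otherwise a swap would increase the total.) Applying the connected case to each $G_l$ (trivially when $k_l = 0$) gives $\eps_k(G) \le \sum_{l=1}^t \lfloor k_l n_l / 2 \rfloor$. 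This is a non-negative integer bounded above by $\sum_l k_l n_l / 2 \le k\, n'(G)/2$, and hence by $\lfloor k\, n'(G)/2 \rfloor$.

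The argument is short and has no real obstacle; the only subtle point is the parity analysis in the connected case, which is what produces the floor, together with the standard fact that a non-negative integer bounded above by a real $x$ is bounded above by $\lfloor x \rfloor$.
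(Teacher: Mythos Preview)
Your proof is correct (with one harmless wording slip: in the even case you write ``equals $m_H + \lfloor k n_H/2\rfloor$'' when you mean ``is at most'', since the term $\tfrac{|k n_H - 2m_H|}{2}$ being subtracted need not vanish).

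The core idea is identical to the paper's: combine the bound $\sum_{i=1}^k \lambda_i \le k\,n'(G)$ with the trace bound $\sum_{i=1}^k \lambda_i \le 2|E|$, so that $\eps_k(G)\le \min\{k n' - |E|,\,|E|\}$, and then use integrality to extract the floor. The difference is that you first reduce to the connected case and then reassemble via Lemma~\ref{lemma:components}, whereas the paper observes that both inequalities already hold for an arbitrary graph (Corollary~\ref{cor:n'_bound} gives $\eps_k(G)\le k\,n'(G)-|E|$ directly, with no connectedness assumption). So the paper just splits on whether $|E|\le \lfloor k n'/2\rfloor$ or $|E|\ge \lceil k n'/2\rceil$ and is done in two lines. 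Your component decomposition and the final ``integer bounded by a real'' step are not wrong, just unnecessary.
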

\begin{proof}
    Let $n'=n'(G)$. Since $\sum_{i=1}^k \lambda_i(L(G))\le \sum_{i=1}^{|V|} \lambda_i(L(G))= 2|E|$, we have $\eps_k(G)\le |E|$. On the other hand, by Corollary \ref{cor:n'_bound}, $\eps_k(G)\le kn'-|E|$. So,
\begin{equation}\label{eq:trivial}
    \eps_k(G)\le \min\{kn'-|E|,|E|\}.
\end{equation}
We divide into two cases. If $|E|\le \lfrac{kn'}{2}$, then, by \eqref{eq:trivial},
\[
    \eps_k(G)\le |E|\le \lfrac{kn'}{2}.
\]
Otherwise, $|E|\ge \ceilfrac{kn'}{2}$, and then, by \eqref{eq:trivial},
\[
 \eps_k(G)\le kn'-|E|\le kn'-\ceilfrac{kn'}{2} = \lfrac{kn'}{2}.
\]
\end{proof}

\begin{proof}[Proof of Theorem \ref{thm:matching_number}]
    By Theorem \ref{thm:edmonds}, there exists an odd set cover $C=\{v_1,\ldots,v_r,S_1,\ldots,S_t\}$ of $G$ with $w(C)=r+\sum_{i=1}^t(|S_i|-1)/2 =\nu(G)$. 
    Let $S=\{v_1,\ldots,v_r\}$, $E_1=\{e\in E:\, e\cap S\ne \emptyset\}$, and $G_1=(V,E_1)$. By  the definition of $G_1$, we have $\tau(G_1)\le |S|= r$. Hence, by Corollary \ref{cor:covering},
    \[
    \eps_k(G_1)\le k \cdot r.
    \]
    Let $G_2=(V,E\setminus E_1)$. By the definition of an odd set cover, every edge in $G_2$ is contained in one of the sets $S_1,\ldots,S_t$. Hence, the number of vertices in the largest connected component of $G_2$ is at most 
    \[
        \max \left\{ |S_i| : \, 1\le i\le t\right\} \le 1+\sum_{i=1}^t \left(|S_i|-1\right) =  2\nu(G)-2r+1.
    \]
    Therefore, by Proposition \ref{prop:small_n_components},
    \[
        \eps_k(G_2)\le \lfrac{k(2\nu(G)-2r+1)}{2}= k\cdot  \nu(G) -k \cdot r +\lfrac{k}{2}.
    \]
    By Lemma \ref{lemma:excess}, we obtain
    \[
        \eps_k(G)\le \eps_k(G_1)+\eps_k(G_2) \le k\cdot \nu(G)+\lfrac{k}{2},
    \]
    as wanted.
\end{proof}

\begin{proof}[Proof of Corollary \ref{cor:weak_brouwer}]
    Let $G=(V,E)$ be a graph. By Corollary \ref{corollary:bounded_generalized_matching}, there exists a subgraph $G'$ of $G$ such that $\eps_k(G')\ge \eps_k(G)$ and $\nu(G')\le 2k-1$.  
    By Theorem \ref{thm:matching_number}, we obtain
    \[
        \eps_k(G)\le \eps_k(G')\le k\cdot \nu(G') + \lfrac{k}{2} \le 2k^2-k+\lfrac{k}{2}=2k^2-\ceilfrac{k}{2}.
    \]
\end{proof}
\begin{remark}
For a bipartite graph $G$, K\"onig's theorem (see, for example, \cite[Theorem 1.1.1]{lovasz1986book}) states that $\tau(G)= \nu(G)$. Therefore, by Corollary \ref{cor:covering}, we have $\eps_k(G)\le k\cdot \nu(G)$. Applying Corollary \ref{corollary:bounded_generalized_matching} as in the proof of Corollary \ref{cor:weak_brouwer}, we obtain that for every bipartite graph $G=(V,E)$ and every $1\le k\le |V|$, $\eps_k(G)\le 2k^2-k$, slightly improving upon the bound in Corollary \ref{cor:weak_brouwer}.

\end{remark}

\section{Concluding remarks}\label{sec:conc}

Equality in the bound in Theorem \ref{thm:matching_number} is achieved when $G$ is a complete graph on $n$ vertices for odd $n$ and $k=n-1$, and when $G$ is a star graph and $k=1$. We believe, however, that the following stronger bound may hold.

We say that a vertex $v$ in a graph $G$ is \emph{non-isolated} if $\deg(v)\ge 1$.

\begin{conjecture}\label{conj:matching_number_improved_bound}
Let $G=(V,E)$ be a graph with $n$ non-isolated vertices, and let $1\leq k\leq n-2$. Then,
\[
    \sum_{i=1}^k \lambda_i(L(G)) \le |E|+ k\cdot \nu(G).
\]
\end{conjecture}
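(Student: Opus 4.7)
The plan is to sharpen the proof of Theorem~\ref{thm:matching_number} so as to eliminate the $\lfrac{k}{2}$ slack, leveraging the hypothesis $k \le n - 2$. After reducing to the case in which $G$ has no isolated vertices (so $n = |V|$), I would apply Edmonds' Theorem~\ref{thm:edmonds} to obtain a minimum-weight odd set cover $\{v_1,\dots,v_r,S_1,\dots,S_t\}$ with $|S_i|=2m_i+1$ and $\nu(G) = r + \sum_i m_i$, and decompose $G = G_1 \cup G_2$ edge-disjointly as in that proof. Corollary~\ref{cor:covering} already gives $\eps_k(G_1) \le kr$, so the task reduces to proving $\eps_k(G_2) \le k\sum_i m_i$, an improvement of exactly $\lfrac{k}{2}$ over what Proposition~\ref{prop:small_n_components} yields.

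Since the $S_i$ are pairwise disjoint, $G_2$ is a vertex-disjoint union of the subgraphs $G_2[S_i]$, so
\[
\eps_k(G_2) \;=\; \max_{\substack{k_1+\dots+k_t=k\\ 0\le k_i\le |S_i|}}\; \sum_{i=1}^t \eps_{k_i}\!\bigl(G_2[S_i]\bigr).
\]
The plan then hinges on the following sharpening of Proposition~\ref{prop:small_n_components}, specific to odd orders:
\begin{quote}
\textbf{Key Lemma.} For every graph $H$ on $N = 2m+1$ vertices and every $0 \le j \le N - 2$, $\eps_j(H) \le jm$.
\end{quote}
Granting the Key Lemma, for each $i$ one has $\eps_{k_i}(G_2[S_i]) \le k_i m_i$ whenever $k_i \in \{0,1,\dots,|S_i|-2\}\cup\{|S_i|\}$; the only troublesome case is $k_i = |S_i|-1$, where $\eps_{k_i}(G_2[S_i]) = |E(G_2[S_i])|$ can exceed $k_i m_i = 2m_i^2$ by up to $m_i$ (attained exactly when $G_2[S_i] \cong K_{|S_i|}$). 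A careful bookkeeping---exploiting the slack $\sum_i (k-k_i)m_i$ in the Key-Lemma bound, the negative contributions $\eps_0(G_2[S_j]) = -|E(G_2[S_j])|$ from components with $k_j = 0$, and, in the extremal single-component case, the additional room afforded by the hypothesis $k \le n - 2$ together with the slack in $\eps_k(G_1) \le kr$---is needed to verify that these excesses are always absorbed.

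The main obstacle is the Key Lemma. It is tight---both $H = K_N - e$ and $H = K_{N-1} \sqcup K_1$ with $j = N-2$ achieve equality---and false at $j = N - 1$, where $\eps_{N-1}(K_N) = Nm$ overshoots $(N-1)m$ by $m$. Using the complementation identity $\lambda_i(L(H)) + \lambda_{N-i}(L(H^c)) = N$ for $1 \le i \le N - 1$, one obtains $\eps_j(H) = N(j-m) + \eps_{N-1-j}(H^c)$, so the Key Lemma is equivalent to the Brouwer-type bound $\eps_{j'}(H^c) \le j'(m+1) - m$ for $1 \le j' \le N-2$, which in turn is implied by Brouwer's Conjecture~\ref{con:brouwer} applied to $H^c$. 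Thus the Key Lemma is essentially a weak Brouwer-type bound for graphs of odd order; a direct attack via the partition-density framework of Section~\ref{sec:structure}, refined to exploit the parity of $N$, seems the most promising path forward.
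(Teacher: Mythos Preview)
Conjecture~\ref{conj:matching_number_improved_bound} is stated in Section~\ref{sec:conc} as an open problem; the paper gives no proof of it, so there is no argument on the paper's side to compare against.

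Your outline correctly reduces the conjecture to the Key Lemma, but the Key Lemma itself is the real obstruction and you have not proved it. At the self-dual parameter $j=m$ (where your complementation identity yields nothing new) the Key Lemma asserts $\eps_m(H)\le m^2$ for every graph $H$ on $2m+1$ vertices. This is strictly sharper than Theorem~\ref{thm:weak_brouwer_improved}, which gives only $\eps_m(H)\le m^2+15m\log m+65m$; removing those lower-order terms is precisely the barrier separating Theorem~\ref{thm:weak_brouwer_improved} from Conjecture~\ref{con:brouwer}, and nothing in the partition-density framework of Section~\ref{sec:structure} is known to achieve it. The suggestion that ``the parity of $N$'' might help is unfounded: adjoining an isolated vertex to any graph on $2m$ vertices shows that the $j=m$ case of your Key Lemma is equivalent to the parity-free statement that $\eps_m(H)\le m^2$ for every $H$ on at most $2m+1$ vertices. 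In effect, your proposal trades one open problem for another of comparable strength. The ``careful bookkeeping'' for allocations with some $k_i=|S_i|-1$ is a secondary but genuine loose end as well: the hypothesis $k\le n-2$ constrains only $\sum_i k_i$, not each $k_i$ individually, so absorbing the possible excess of $m_i$ from such a component still requires an argument you have only sketched.
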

Note that the requirement $k\le n-2$ in Conjecture \ref{conj:matching_number_improved_bound} is necessary. Indeed, as mentioned above, if $n$ is an odd integer,  $k=n-1$, and  $K_n=(V,E)$ is the complete graph on $n$ vertices, then we have $\nu(K_n)=(n-1)/2$ and $\sum_{i=1}^{k}\lambda_i(L(K_n))=2|E|=|E|+\binom{n}{2} = |E| +k\cdot \nu(G)+(n-1)/2$. 
However, since for every graph $G=(V,E)$ with $n$ non-isolated vertices,  $\sum_{i=1}^k \lambda_i(L(G))=2|E|$ for all $k\ge n-1$, the restriction $k\le n-2$ in Conjecture \ref{conj:matching_number_improved_bound} is not a significant one.

Regarding the relation between the Laplacian eigenvalues of a graph and its covering number, we propose the following conjecture. 

\begin{conjecture}\label{conj:cover}
    Let $G=(V,E)$ be a graph. Then, for all $\tau(G)\le k\le |V|$,
    \begin{equation}\label{eq:cover_conj}
        \sum_{i=1}^k \lambda_i (L(G)) \le 
        |E|+ k\cdot \tau(G) - \binom{\tau(G)}{2}.
    \end{equation}
\end{conjecture}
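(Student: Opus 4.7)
\medskip
\noindent \textbf{Sketch of an approach to Conjecture \ref{conj:cover}.}
The natural first attempt is induction on $t = \tau(G)$, mirroring the argument of Lemma \ref{lemma:sa_at_most_tau}. Fix a vertex $v$ in some minimum vertex cover $C$ of $G$, decompose $G$ as the edge-disjoint union of $G - v$ (with $v$ kept as an isolated vertex) and the star $S$ of $\deg(v)$ edges centered at $v$, and apply Lemma \ref{lemma:excess} together with $\eps_k(S) \le \min(k, \deg(v))$. Since $\tau(G - v) \le t - 1$, the inductive hypothesis gives
\[
\eps_k(G) \;\le\; \eps_k(G - v) + \min(k, \deg(v)) \;\le\; k(t-1) - \tbinom{t-1}{2} + \min(k, \deg(v)),
\]
which equals the target $kt - \tbinom{t}{2}$ precisely when $\deg(v) \le k - t + 1$. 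Hence whenever some minimum vertex cover contains a vertex of sufficiently small degree, the inductive step closes.

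The main obstacle is the complementary case, where every vertex in every minimum vertex cover of $G$ has degree $\ge k - t + 2$. This case cannot simply be avoided: for $G = K_n$ at $k = \tau(G) = n - 1$ the conjecture is tight, and every cover vertex has degree $n - 1 = k$. For $K_n$ the excess inequality remains sharp --- one checks $\eps_{n-1}(K_n) = \eps_{n-1}(K_{n-1}) + \eps_{n-1}(S_n) = \tbinom{n-1}{2} + (n-1) = \tbinom{n}{2}$ with equality --- but the inductive hypothesis itself, applied to $G - v = K_{n-1}$ at $k = n - 1 > \tau(K_{n-1})$, gives the loose $(n-2)(n+1)/2$ instead of the correct value $|E(K_{n-1})| = \tbinom{n-1}{2}$. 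This strongly suggests strengthening the inductive statement to
\[
\eps_k(G) \;\le\; \min\!\Bigl(|E(G)|,\;\; k\,\tau(G) - \tbinom{\tau(G)}{2}\Bigr),
\]
after which $K_n$ closes exactly via the $|E|$ branch.

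A second regime resists even this strengthening: ``dense but not complete'' graphs such as $K_{t,n}$ with $n$ large and $k = t$, where both branches of the minimum are strictly larger than the true $\eps_t(K_{t-1,n}) = 2(t-1)$ and the step overshoots by $t - 1$ per level. Here star removal is poorly matched to the bipartite structure of $K_{t,n}$, and a natural remedy is to attack the bound directly via the variational identity
\[
\sum_{i=1}^k \lambda_i(L(G)) \;=\; \max_{X \in \Rea^{|V| \times k},\; X^\top X = I_k} \;\sum_{\{u,v\} \in E} \|X[u] - X[v]\|^2,
\]
splitting the right-hand side into contributions from $E(G[C])$ and $E(G[C, V \setminus C])$ and exploiting $\sum_v \|X[v]\|^2 = k$ together with the independence of $V \setminus C$. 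The hardest step, I expect, will be unifying these two regimes into a single proof: the $K_n$ extreme forces sharpness on complete-like graphs, while $K_{t,n}$-type examples force the argument to propagate more structural information than $\tau(G)$ across inductive levels (or to abandon induction altogether). A successful proof will likely require either a structural lemma restricting the class of graphs whose minimum vertex covers consist only of high-degree vertices, or a sharper eigenvalue inequality interpolating between these two extremes.
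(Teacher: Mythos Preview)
The statement you are addressing is labeled \emph{Conjecture} in the paper, and the paper does not prove it; it is presented in the concluding remarks as an open problem, accompanied only by the observation that the split graphs $S_{n,r}$ attain equality for $r\le k\le n-1$ and that the bound would strengthen both Corollary~\ref{cor:covering} and Brouwer's conjecture in the range $k\ge\tau(G)$. There is therefore no proof in the paper to compare your attempt against.

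Your write-up is honest about this: you explicitly call it a sketch, identify where the na\"ive induction breaks, and isolate two extremal regimes ($K_n$ and $K_{t,n}$) that any argument must handle simultaneously. Your computations are correct --- in particular the identity $\eps_{n-1}(K_n)=\binom{n}{2}$ showing tightness, and the observation that the inductive bound for $K_{n-1}$ at $k=n-1$ is already too loose, so that the induction hypothesis must be strengthened to at least $\min(|E|,\,k\tau-\binom{\tau}{2})$. The $K_{t,n}$ analysis correctly shows that even this strengthening is insufficient. What you have is a reasonable survey of the obstacles, not a proof and not claimed as one; since the paper itself offers no proof, there is no gap to name beyond the one you have already named yourself.
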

For $1\le r\le n$, let $S_{n,r}$ be the graph on vertex set $[n]$ whose edges are all the pairs $\{i,j\}$ where $1\le i \le r$ and $i<j\le n$. It is not hard to check that the graph $S_{n,r}$ attains equality in \eqref{eq:cover_conj} for all $r\le k\le n-1$.

Note that Conjecture \ref{conj:cover} would improve on Corollary \ref{cor:covering} for $k\ge \tau(G)$. Moreover, the bound in Conjecture \ref{conj:cover}
is stronger that the bound $|E|+\binom{k+1}{2}$ from Conjecture \ref{con:brouwer} for all $k\ge \tau(G)$ (since the function $kx-\binom{x}{2}$ is increasing for $x\le k$).

In light of Theorem \ref{thm:weak_brouwer_improved}, we suggest the following problem as a potential step towards Brouwer's conjecture.

\begin{problem}
    Find the smallest possible constant $1/2\le C\le 1$ for which the following holds: For every $\varepsilon>0$ there exists $k_0\ge 1$ such that for every $k\ge k_0$ and every graph $G=(V,E)$ with $|V|\ge k$,
   \[
        \sum_{i=1}^k \lambda_i(L(G)) \le |E| + (1+\varepsilon)C k^2.
    \]
\end{problem}
Theorem \ref{thm:weak_brouwer_improved} shows that this statement holds for $C=1$. Proving the statement for $C=1/2$ would provide an ``approximate" version of Conjecture \ref{con:brouwer}.

\bibliographystyle{abbrv}
\bibliography{main}

\end{document}